\documentclass[12pt,reqno]{amsart}

\usepackage{a4wide}
\usepackage{amssymb}
\usepackage{needspace}
\usepackage{hyperref}
\usepackage[inline]{enumitem}
\usepackage{booktabs}
\usepackage{comment}

\usepackage{tikz}
\usetikzlibrary{decorations.pathreplacing, tikzmark}
\usetikzlibrary{matrix,fit,positioning,calc}
\usetikzlibrary{positioning}
\usetikzlibrary{arrows.meta}
\usepackage{amsmath}

\usepackage{pgfplots} \pgfplotsset{compat=1.18}

\newtheorem{theorem}[subsection]{Theorem}

\newtheorem{proposition}[subsection]{Proposition}
\newtheorem*{proposition*}{Proposition}
\newtheorem{example}{Example}
\newtheorem{remark}{Remark}
\newtheorem*{claim*}{Claim}
\newtheorem*{claim}{Claim}

\providecommand{\Z}{\mathbb{Z}}
\providecommand{\N}{\mathbb{N}}
\providecommand{\C}{\mathbb{C}}

\providecommand{\T}{\mathbb{T}}
\providecommand{\E}{\mathop{\mathbb{E}}}
\renewcommand{\P}{\mathop{\mathbb{P}}}
\providecommand{\wh}{\widehat}

\newcommand{\dd}{\,\mathrm{d}}

\DeclareMathOperator{\sinc}{sinc}

\newcounter{constcntbig} 
\newcounter{constcntlittle} 

\makeatletter
\newcommand{\newconstbig}[1]{%
    \refstepcounter{constcntbig}%
    \hypertarget{const:#1}{\mbox{}}%
    \protected@write\@auxout{}%
        {\string\newlabel{const:#1}{{\arabic{constcntbig}}{\thepage}}}%
    C_{\arabic{constcntbig}}%
}
\makeatother
\makeatletter
\newcommand{\newconstlittle}[1]{%
    \refstepcounter{constcntlittle}%
    \hypertarget{const:#1}{\mbox{}}%
    \protected@write\@auxout{}%
        {\string\newlabel{const:#1}{{\arabic{constcntlittle}}{\thepage}}}%
    c_{\arabic{constcntlittle}}%
}
\makeatother

\newcommand{\refconstlittle}[1]{%
    \hyperlink{const:#1}{c_{\ref{const:#1}}}%
}
\newcommand{\refconstbig}[1]{%
    \hyperlink{const:#1}{C_{\ref{const:#1}}}%
}

\numberwithin{equation}{section}

\begin{document}

\title{The Wiener norm of lifts of dense Sidon sets}

\author{Tom Sanders}
\address{Mathematical Institute\\
University of Oxford\\
Radcliffe Observatory Quarter\\
Woodstock Road\\
Oxford OX2 6GG\\
United Kingdom}
\email{tom.sanders@maths.ox.ac.uk}
\begin{abstract}
We describe, with proof, an infinite class of examples of finite sets $A$ for which $\|\wh{1_A}\|_1\geq (c_\mathcal{R}-o_{|A| \rightarrow \infty}(1))\sqrt{|A|}$ where $c_\mathcal{R}=0.953\dots$.
\end{abstract}

\maketitle

\section{Introduction}

We begin with a little notation following \cite{rud::1}. Any discrete Abelian group $G$ has a compact Abelian dual group $\Gamma$.  We write $\|\cdot \|_\infty$ for the supremum norm on $C(\Gamma)$, the space of continuous functions $\Gamma \rightarrow \C$.

The group $\Gamma$ supports a Haar probability integral \cite[\S1.1.1, p.~1]{rud::1} which we denote $\int$. For $1 \leq p <\infty$ we define the $L_p$-norm of $f \in C(\Gamma)$ by
\begin{equation*}
\|f\|_p:=\left(\int{|f(\gamma)|^p\dd \gamma}\right)^{1/p}.
\end{equation*}
We write $\langle x,\gamma\rangle$ for the usual pairing of $x \in G$ and $\gamma \in \Gamma$  \cite[\S1.2.1, (3), p.~7]{rud::1}, and for $f \in \ell_1(G)$ the Fourier transform $\wh{f}$ is in $C(\Gamma)$ and is defined pointwise by
\begin{equation}\label{eqn.ftdef}
\wh{f}(\gamma):=\sum_{x \in G}{f(x)\overline{\langle x,\gamma\rangle}} \text{ for all } \gamma \in \Gamma.
\end{equation}
The map $f \mapsto \|\wh{f}\|_1$ is a norm called the Wiener norm.

We are interested in bounds on $\|\wh{1_A}\|_1$ that depend only on $n=|A|$. When $G=\Z$, $\Gamma=\T$, and $\langle z,\theta\rangle:=\exp(2\pi i z\theta)$ it was conjectured by Littlewood that 
\begin{equation*}
\|\wh{1_A}\|_1 = \Omega(\log n) \text{ for any set }A\text{ of }n\text{ integers.}
\end{equation*}
This was famously proved by Konyagin \cite{kon::} and McGehee, Pigno, and Smith \cite{mcgpigsmi::}.

In the other direction, Bourgain (after \cite[(3.1), p.~61]{bou::7}) asked whether there is an absolute $c_{\textsc{b}}>0$ such that
\begin{equation}
\label{eqn.bb}\|\wh{1_A}\|_1 \leq (1-c_{\textsc{b}}+o_{n \rightarrow \infty}(1))\sqrt{n} \text{ for any set }A\text{ of }n\text{ integers}.
\end{equation}
He was able to use the work of McGehee, Pigno, and Smith to show that 
\begin{equation}\label{eqn.bbound}
\|\wh{1_A}\|_1 \leq \left(1-\Omega\left(\frac{\log n}{n}\right)\right)\sqrt{n}.
\end{equation}
Bourgain's question is the motivation of this paper and, while we do not answer it, our main contribution is a new construction giving a small (but positive) upper bound on $c_\textsc{b}$. This construction comes from lifting examples in other discrete Abelian groups where the analogue of $c_\textsc{b}$ is equal to $0$, and we begin the paper with these in the next section.

\subsection*{AI use} Everything was discussed extensively with AI. The proofs were AI generated and then thoroughly reworked in collaboration with AI. 

\section{The upper bound in discrete Abelian groups}\label{sec.2} For $G$ a discrete Abelian group with dual $\Gamma$ as above, Parseval's theorem \cite[\S1.6.1, Theorem, p.~26]{rud::1} says that
\begin{equation*}
\int{\wh{f}(\gamma)\overline{\wh{g}(\gamma)}\dd \gamma}=\sum_{x \in G}{f(x)\overline{g(x)}} \text{ whenever }f,g \in \ell_1(G).
\end{equation*}
We have restricted to $\ell_1(G)$ so we do not have to discuss extending the Fourier transform from (\ref{eqn.ftdef}) to functions in $\ell_2(G) \setminus \ell_1(G)$.

This immediately gives  $\|\wh{1_A}\|_1 \leq \sqrt{n}$ for any set $A$ of size $n$, but in fact it is possible to give a slight improvement of this. To show this, we require a little more notation: for $f,g \in \ell_1(G)$, the convolution $f \ast g$ is also in $\ell_1(G)$ and is defined pointwise by
\begin{equation*}
 f \ast g(x)=\sum_{y+z=x}{f(y)g(z)}\text{ for all }x \in G.
\end{equation*}
With this notation $\wh{f \ast g}(\gamma)=\wh{f}(\gamma)\wh{g}(\gamma)$ for all $\gamma \in \Gamma$. 

\begin{proposition}\label{prop.upgen}
Suppose that $G$ is a discrete Abelian group and $A \subset G$ has size $n$. Then
\begin{equation*}
\|\wh{1_A}\|_1 \leq \sqrt{n-1}+\frac{1}{n + \sqrt{n-1}}.
\end{equation*}
\end{proposition}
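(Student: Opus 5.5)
The plan is to get the improvement from the fourth moment of $F:=|\wh{1_A}|$, combined with Parseval and the trivial pointwise bound. Three facts about $F$ are available. First, $0\le F\le n$ pointwise, since $|\wh{1_A}(\gamma)|\le\sum_{x\in A}1$. Second, Parseval gives $\int F^2=n$. Third, $F^2=\wh{1_A\ast 1_{-A}}$, so Parseval applied to $1_A\ast 1_{-A}$ gives
\[
\int F^4=\sum_x (1_A\ast 1_{-A}(x))^2=E(A),
\]
the number of quadruples $(a,b,c,d)\in A^4$ with $a+b=c+d$. The trivial solutions $\{a,b\}=\{c,d\}$ alone contribute $2n^2-n$, so $\int F^4\ge 2n^2-n$ for every $A$. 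The excess over $n$ in $\int F^4/n$ is what should push $\int F$ below $\sqrt n$.

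To turn these moments into a bound on $\int F$, I will majorise $\sqrt{t}$ on $[0,n^2]$ by a concave quadratic $p(t)=\alpha+\beta t-\gamma t^2$ with $\gamma\ge 0$. Setting $t=F(\gamma)^2$ and integrating against the Haar probability measure then gives
\[
\int F\le \alpha+\beta n-\gamma(2n^2-n).
\]
The natural choice of $p$ comes from the dual extremal problem. A distribution of $F$ on $[0,n]$ with second moment $n$ and fourth moment $2n^2-n$ that maximises the first moment should live on two points, $s$ and $n$, with the remaining mass at $0$. Accordingly $p$ should touch $\sqrt{t}$ tangentially at $t=s^2$ and meet it at the endpoint $t=n^2$. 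These conditions determine $\alpha,\beta,\gamma$ as functions of $s$. I will then choose $s$ so that the resulting two-point law has exactly the prescribed moments. I expect $s$ to be essentially $\sqrt{n-1}$, with mass roughly $1/n^2$ at $n$. Substituting should produce $\sqrt{n-1}+1/(n+\sqrt{n-1})$, after rationalising terms of the shape $(n-\sqrt{n-1})/(n^2-(n-1))$.

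The main obstacle is verifying that the chosen $p$ really satisfies $p(t)\ge\sqrt t$ on all of $[0,n^2]$, with $\gamma\ge0$ so that the lower bound on $E(A)$ can be used.

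1. Near $s^2$ the inequality holds because $p$ is tangent to $\sqrt{t}$ there.
2. The difference $p(t)-\sqrt{t}$ is a quadratic minus a concave function. In the variable $u=\sqrt t$ it is the quartic $-\gamma u^4+\beta u^2-u+\alpha$, which has a double root at $u=s$ and a root at $u=n$.
3. The sign analysis then reduces to locating the fourth root, which must lie at or below $0$, and checking that $p(0)=\alpha\ge 0$.

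If the parameters do not come out exactly, the fallback is to keep $s$ free, optimise the bound $\alpha+\beta n-\gamma(2n^2-n)$ over $s$, and compare the result with the stated expression. That expression is the form in which the optimum should appear.
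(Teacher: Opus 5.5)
Your proposal is correct and is essentially the paper's proof. The paper uses the same three inputs ($0\le|\wh{1_A}|\le n$, Parseval, and $\|\wh{1_A}\|_4^4\ge 2n^2-n$) and the majorant $t\le\alpha_0+\alpha_1t^2-\alpha_2t^4$ on $[0,n]$, whose difference factors as $\frac{(n-t)(t-r)^2(n+2r+t)}{2r(n+r)^2}$ with $r=\sqrt{n-1}$; this is exactly your double root at $s=\sqrt{n-1}$, simple root at $n$, and fourth root at $-(n+2s)$ (forced by the vanishing cubic coefficient). Two small things to tidy: the case $n=1$ must be handled separately as the paper does, since there $s=0$ and your $\gamma=1/(2s(n+s)^2)$ is undefined; and the extremal law puts no mass at $0$, so it is matching total mass $1$ together with the two moments that pins down $s^2=n-1$.
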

\begin{proof}
This improvement rests on the fact, used in \cite[(3.3), p.~61]{bou::7}, that
\begin{align}\nonumber 
\|\wh{1_A}\|_4^4 =\sum_{z \in G}{1_A \ast 1_{-A}(z)^2}& \geq 1_A \ast 1_{-A}(0)^2 + \sum_{z \neq 0}{1_A \ast 1_{-A}(z)}\\ \label{eqn.l4}  & = n^2 + n(n-1) = 2n^2-n.
\end{align}
The conclusion of the proposition is certainly true if $n=1$ since both sides are $1$ in that case. Assume $n>1$ and write $r:=\sqrt{n-1}$ and
\begin{equation*}
\alpha_0:=\frac{nr(n+2r)}{2(n+r)^2}, \alpha_1:=\frac{n^2+2nr+3r^2}{2r(n+r)^2}, \alpha_2:= \frac{1}{2r(n+r)^2}.
\end{equation*}
Then
\begin{equation*}
\alpha_0+\alpha_1t^2-\alpha_2t^4-t=\frac{(n-t)(t-r)^2(n+2r+t)}{2r(n+r)^2} \geq 0 \text{ for all }t \in [0,n].
\end{equation*}
Since $|\wh{1_A}(\gamma)| \in [0,n]$ for all $\gamma \in\Gamma$ we may take $t=|\wh{1_A}(\gamma)|$ in this inequality and integrate over $\gamma$ to get
\begin{align*}
\|\wh{1_A}\|_1 & \leq \alpha_0 +\alpha_1\|\wh{1_A}\|_2^2 -\alpha_2\|\wh{1_A}\|_4^4\\ & \leq \alpha_0 +\alpha_1n -\alpha_2(2n^2-n)=\sqrt{n-1}+\frac{1}{n + \sqrt{n-1}},
\end{align*}
where we used Parseval's theorem to compute $\|\wh{1_A}\|_2^2$ and (\ref{eqn.l4}) and non-negativity of $\alpha_2$ to bound $-\alpha_2\|\wh{1_A}\|_4^4$. The result is proved.
\end{proof}
\begin{remark}The values of $\alpha_0$, $\alpha_1$, and $\alpha_2$ in the argument above can be found by solving the optimisation problem
\begin{equation*}
\min\{\alpha_0+\alpha_1n-\alpha_2(2n^2-n): \alpha_2 \geq 0, t \leq \alpha_0+\alpha_1t^2-\alpha_2t^4\text{ for all }t \in [0,n]\}.
\end{equation*}
\end{remark}
It turns out that, for infinitely many $n$, there is a discrete Abelian group $G$ and set $A \subset G$ of size $n$ giving equality above. To describe these, recall that a perfect difference set is a set $A \subset G$ of size $n$ such that 
\begin{equation}\label{eqn.ft}
1_A \ast 1_{-A} - (n-1)1_{\{0\}} = 1_G.
\end{equation}
In words, every non-identity element $x \in G$ can be written uniquely as $x=a-a'$ for $a,a' \in A$. In particular, this means that $n(n-1)+1=|G|$.

One might be forgiven for wondering whether there are any perfect difference sets --- it sounds like a rather stringent condition --- but it happens that for $N=q^2+q+1$ with $q$ a prime power, Singer constructed perfect difference sets in $\Z_N:=\Z/N\Z$ in \cite{sin::} (of size $q+1$).
\begin{example}[Perfect difference sets]\label{ex.g}
For $A \subset G$ a perfect difference set, $\|\wh{1_A}\|_1$ attains the bound in Proposition \ref{prop.upgen}. Indeed, taking the Fourier transform of (\ref{eqn.ft}), we get
\begin{equation}\label{eqn.flat}
|\wh{1_A}(\gamma)|=\sqrt{n-1} \text{ for all }\gamma \neq 0.
\end{equation}
Hence
\begin{equation*}
 \|\wh{1_A}\|_1 = \frac{1}{|G|}(n + (|G|-1)\sqrt{n-1}) =\sqrt{n-1}+\frac{1}{n + \sqrt{n-1}}.
\end{equation*}
\end{example}
In particular, these examples show that there is no analogue of Bourgain's result (\ref{eqn.bbound}) for these groups.

\section{Sets of integers with large Wiener norm}  In \cite[(3.1)]{bou::7} Bourgain defines
\begin{equation*}
\beta_n:=\sup{\left\{\frac{1}{\sqrt{n}}\|\wh{1_A}\|_1: A \subset \Z \text{ and }|A|=n\right\}}
\end{equation*}
and then asks if $\Sigma:=\sup_{n>1}{\beta_n}<1$. Since $\beta_n<1$ for all $n>1$ (for example, by Proposition \ref{prop.upgen}) this is equivalent to asking if
\begin{equation*}
c_\textsc{b}:=1-\limsup_{n \rightarrow \infty}{\beta_n}
\end{equation*}
has $c_\textsc{b}>0$. $\Sigma$ can be seen to be quite large as a result of what happens for small values of $n$ \emph{e.g.\ }
\begin{equation*}
\beta_2=\frac{1}{\sqrt{2}}\int_0^1{|1+\exp(2\pi i \theta)|\dd\theta} = \frac{2\sqrt{2}}{\pi}=0.900\dots
\end{equation*}

To find upper bounds on the constant $c_\textsc{b}$ it is natural to start by seeing what happens when $A=\{a_1,\dots,a_n\}$ is a set of $n$ `sufficiently independent' integers. In this case we expect $\frac{1}{\sqrt{n}}\wh{1_A}$ to have a law that is approximately the law of $\frac{1}{\sqrt{n}}(X_1+\cdots + X_n)$, where the $X_j$s are independent Steinhaus random variables, meaning independent uniform $S^1$-valued random variables.

By the Central Limit Theorem we expect this to converge to the law of $Z \sim N_\C(0,1)$ (\emph{i.e.\ }$Z=X+iY$ where $X,Y \sim N(0,1/2)$) as $n \rightarrow \infty$, and hence
\begin{equation}\label{eqn.converge}
\frac{1}{\sqrt{n}}\|\wh{1_A}\|_1 \rightarrow \E{|Z|}=\frac{\sqrt{\pi}}{2} \text{ as }n \rightarrow \infty.
\end{equation}
The meaning of `sufficiently independent' can be made precise, and this is done by Aistleitner in \cite{Aistleitner:2013aa} where it is shown that if $a_1,a_2,\dots$ is a lacunary sequence, meaning there is some $q>1$ such that $a_{k+1}>qa_k$ for all $k$, then $A=\{a_1,\dots,a_n\}$ satisfies (\ref{eqn.converge}). In particular,
\begin{equation}\label{eqn.ais}
c_\textsc{b}\leq 1-\frac{\sqrt{\pi}}{2}=0.113\dots,
\end{equation}
 which is the content \cite[Theorem 1, p.~682]{Aistleitner:2013aa} though that result is stated about $\Sigma$ as defined above.

In view of Example \ref{ex.g} it is natural to ask if there are perfect difference sets for the integers. There are not, since such sets only exist in finite groups, but there is an analogue: a perfect Golomb ruler of length $N$ is a set $A \subset \Z$ such that
\begin{equation}\label{eqn.fi}
1_A \ast 1_{-A} = 1_{\{-N,\dots,N\}} + (|A|-1)1_{\{0\}}.
\end{equation}
Taking the Fourier transform and noting that the Dirichlet kernel -- exactly the Fourier transform of $1_{\{-N,\dots,N\}}$ -- takes negative values whose magnitude is proportional to $N$ we see that such sets cannot exist for arbitrarily large values of $N$ and in fact \cite{Golomb:1972aa} showed that the largest $N$ for which they do exist is $N=6$ (in this case we may take \emph{e.g.\ }$A=\{0,1,4,6\}$).

Singer's construction produces perfect difference sets inside cyclic groups, and we can ask what happens if we lift those sets to the integers. This idea is not new, having appeared ten years ago in the first version of \cite{Houcein-el-Abdalaoui:2023aa}. The aim of \cite[Theorem 2.1]{Houcein-el-Abdalaoui:2023aa} is to show that $\|\wh{1_A}\|_1 \sim \sqrt{n}$ when $A$ is the lift of a Singer perfect difference set of size $n$, and this would imply $c_\textsc{b}=0$. The strategy for proving this in \cite[\S5]{Houcein-el-Abdalaoui:2023aa} makes use of the Marcinkiewicz-Zygmund inequalities for interpolation. We shall use these in \S\ref{sec.limitations} to show that for a wide class of sets that are close to perfect difference sets, their lifts are \emph{not} examples of sets $A$ of size $n$ with $\|\wh{1_A}\|_1 \sim \sqrt{n}$.

Our main result -- Theorem \ref{thm.mi}  -- shows that a slight variant of Singer's construction lifts to examples of sets showing that
\begin{equation}\label{eqn.new}
c_\textsc{b} \leq 1- \int_0^1{\E{\left|\sum_{m \in \Z}{Y_m\sinc (\pi (t-m))}\right|}\dd t}=0.046\dots
\end{equation}
Here $Y_m$ are independent Steinhaus random variables and $\sinc(x):=\frac{\sin x}{x}$ extended by continuity at $x=0$. This improves on (\ref{eqn.ais}).

We do not know of a simpler expression for the right hand term. Its value can be computed to any desired precision by truncating the sum, which leaves a finite number of random variables and the resulting integral-expectation can be approximated by a Riemann sum. Alternatively it can be estimated by sampling.

\section{The Wiener norm of lifts of certain difference sets}\label{sec.lift}
We will be considering sets $A \subset [N]:=\{1,\dots,N\}$, that arise as lifts of subsets of $\Z_N$. The crucial identity for us is
\begin{align}
\nonumber f_A(\theta)&:=\frac{1}{\sqrt{p}}\wh{1_A}(\theta)-\frac{n}{\sqrt{p}N}\wh{1_{[N]}}(\theta)\\ &=\frac{1}{N}\sum_{j=1}^{N-1}{\frac{1}{\sqrt{p}}\wh{1_A}\left(\frac{j}{N}\right)\wh{1_{[N]}}\left(\theta-\frac{j}{N}\right)} \text{ for all }\theta \in \T.\label{eqn.flarge}
\end{align}
The second equality here can be checked by inserting the definition of the Fourier transform and interchanging the order of summation.

The set $A$ will be chosen so that $\|f_A\|_1$ is well approximated by the expected $L_1$-norm of 
\begin{equation}\label{eqn.Fmod}
F(\theta):=\frac{1}{N}\sum_{j=1}^{N-1}{X_j\wh{1_{[N]}}\left(\theta-\frac{j}{N}\right)} \text{ for all }\theta \in \T,
\end{equation}
where the $X_j$s are independent Steinhaus random variables. This expectation can in turn be simplified:
\begin{proposition}\label{prop.stein}
For $F$ as in (\ref{eqn.Fmod}) we have
\begin{equation*}
\int_0^1{\E{|F(\theta)|}\dd \theta} \rightarrow c_\mathcal{R}:=\int_0^1{\E{\left|\sum_{m\in \Z}{Y_m\sinc (\pi(s-m))}\right|}\dd s} \text{ as }N \rightarrow \infty
\end{equation*}
where $(Y_m)_{m \in \Z}$ are independent Steinhaus random variables.
\end{proposition}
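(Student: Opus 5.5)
Compute the kernel explicitly and rescale. With $\wh{1_{[N]}}(\phi)=\sum_{x=1}^N e^{-2\pi i x\phi}=e^{-\pi i (N+1)\phi}\frac{\sin(\pi N\phi)}{\sin(\pi\phi)}$, we have
\begin{equation*}
\frac1N\wh{1_{[N]}}\Big(\theta-\frac jN\Big)=e^{-\pi i(N+1)(\theta-j/N)}\,\frac{\sin(\pi(N\theta-j))}{N\sin(\pi(\theta-j/N))}.
\end{equation*}
The phase $e^{-\pi i (N+1)\theta}$ is common to all terms and does not affect $|F|$. The $j$-dependent phase $e^{\pi i (N+1)j/N}$ can be absorbed into $X_j$, since a Steinhaus variable times a fixed unimodular constant is again Steinhaus, and independence is preserved. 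Hence $\E|F(\theta)|=\E|\sum_{j=1}^{N-1}X_j D_N(N\theta-j)|$, where $D_N(u):=\frac{\sin(\pi u)}{N\sin(\pi u/N)}$ is $N$-periodic up to sign.

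Now substitute $\theta=(k+s)/N$ with $k\in\{0,\dots,N-1\}$ and $s\in[0,1)$. Then
\begin{equation*}
\int_0^1\E|F|=\frac1N\sum_{k=0}^{N-1}\int_0^1\E\Big|\sum_{j=1}^{N-1}X_jD_N(k+s-j)\Big|\dd s .
\end{equation*}
Reindex with $m=j-k$ taken modulo $N$, choosing the representative $m\in(-N/2,N/2]$. Then $|D_N(s-m)|$ with the correct sign gives $|\sin(\pi(s-m))|/(N|\sin(\pi(s-m)/N)|)$, and the signs can again be absorbed into the Steinhaus variables. For each $k$ this gives $\E|\sum_{m\in M_k}Y_m K_N(s-m)|$, where $K_N(u)=\frac{\sin(\pi(s-m))}{N\sin(\pi u/N)}$ and $M_k$ is $\{m:|m|\le N/2\}$ with one index removed, namely the one corresponding to $j\equiv 0$, i.e.\ $m\equiv -k$.

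The comparison with $G(s):=\sum_{m\in\Z}Y_m\sinc(\pi(s-m))$ is in $L^2$. Since $\E|\cdot|$ is $1$-Lipschitz with respect to $(\E|\cdot|^2)^{1/2}$, and for independent Steinhaus variables $\E|\sum c_mY_m|^2=\sum|c_m|^2$, it suffices to show that on average over $k$ and $s$,
\begin{equation*}
\sum_{m}\big|c^{(k,N)}_m(s)-\sinc(\pi(s-m))\big|^2\to0,
\end{equation*}
where $c^{(k,N)}_m$ is $K_N(s-m)$ on $M_k$ and $0$ otherwise. This splits into three contributions.

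\emph{(i) The missing index.} The removed index $m\equiv-k$ contributes $|\sinc(\pi(s-m))|^2\lesssim 1/(1+m^2)$. Averaged over $k$, this is $O(1/N)$. For $k$ near $0$ or $N$ it is $O(1)$, but this affects only $O(1)$ values of $k$, so the average is still $o(1)$.

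\emph{(ii) The tail.} The terms with $|m|>N/2$ contribute $\sum_{|m|>N/2}\sinc^2\lesssim 1/N$.

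\emph{(iii) The kernel error for $|m|\le N/2$.} For $|u|\le N/2+1$ we have
\begin{equation*}
\Big|\frac{1}{N\sin(\pi u/N)}-\frac{1}{\pi u}\Big|\lesssim \frac{|u|}{N^2},
\end{equation*}
because $1/\sin x-1/x=O(x)$ on $|x|\le \pi/2+o(1)$. Summing the squares over $|m|\le N/2$ gives $O(N^3/N^4)=O(1/N)$.

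Hence the $L^2$ distance is $o(1)$ uniformly in $s$ apart from the $O(1)$ exceptional $k$. Since $\E|\cdot|\le(\E|\cdot|^2)^{1/2}=O(1)$, those exceptional $k$ contribute $O(1/N)$ after the factor $1/N$. The remaining average then equals $\int_0^1\E|G(s)|\dd s+o(1)=c_\mathcal R+o(1)$.

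The main obstacle is the reindexing step: keeping track of the $\pm$ sign of the periodic Dirichlet kernel $D_N$ and the $e^{\pi i j/N}$ phases when passing from $j$ to $m$ modulo $N$, so that all of them are genuinely absorbed into independent Steinhaus variables. I would handle this by noting that, for fixed $k$, the map $j\mapsto m$ is a bijection onto $M_k$, and that each $X_j$ is multiplied by a deterministic unimodular constant depending only on $j$, $k$ and $N$, not on $s$. The constant must not depend on $s$, because the expectation is taken pointwise in $s$ before integrating. Once the reindexing is set up correctly, the estimate (iii) near $|m|\approx N/2$, where $\sin(\pi u/N)$ is bounded below, is elementary.
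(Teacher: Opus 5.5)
Your proof is correct, and it takes a different route from the paper's.

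\textbf{The paper's route.} It never wraps indices around. It discards the $O(M)$ values of $k$ within $M$ of $0$ or $N$. For the remaining $k$ it discards the $j$ with $|k-j|>M$, using the crude bound (\ref{eqn.l2dump}), at a total cost of $O(MN^{-1}+M^{-1/2})$. It then observes that what remains is independent of $k$ and lets $N\to\infty$ at fixed $M$ by bounded convergence. Finally it lets $M\to\infty$, using almost sure convergence and uniform integrability of $S_M$.

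\textbf{Your route.} You use the global identity $D_N(u+N)=(-1)^{N+1}D_N(u)$ to reindex modulo $N$, so every $k$ sees an essentially complete window. You then couple each inner sum directly with the full series $G(s)$ through $\bigl|\E|U|-\E|V|\bigr|\le(\E|U-V|^2)^{1/2}$. This removes the auxiliary parameter $M$ and all the soft limit theorems, and it produces an explicit rate.

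\textbf{A cleaner finish.} You do not need the remark about ``$O(1)$ exceptional $k$''. That phrasing only makes sense for a fixed tolerance, since the number of $k$ with $\min(k,N-k)\le L$ is of order $L$. Instead:
\begin{enumerate}
\item Write $d_k(s)$ for your $L^2$ distance. Cauchy--Schwarz over $(k,s)$ bounds the total error by $\bigl(\frac1N\sum_k\int_0^1 d_k(s)^2\dd s\bigr)^{1/2}$.
\item The missing index $m_0(k)\equiv-k$ runs bijectively over $(-N/2,N/2]\cap\Z$ as $k$ runs over $\{0,\dots,N-1\}$. So the average over $k$ of the missing-index term is at most $\frac1N\sum_{m}\sinc^2(\pi(s-m))\le\frac1N$, by (\ref{opening}).
\item Together with your estimates (ii) and (iii), this gives $\int_0^1\E|F(\theta)|\dd\theta=c_\mathcal{R}+O(N^{-1/2})$.
\end{enumerate}

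\textbf{Small slips.}
\begin{itemize}
\item The index set should be $M_k=\bigl((-N/2,N/2]\cap\Z\bigr)\setminus\{m_0(k)\}$. Your $\{|m|\le N/2\}$ minus one point has one element too many when $N$ is even, so it is not the image of the bijection $j\mapsto m$.
\item $K_N(u)$ should read $\sin(\pi u)/(N\sin(\pi u/N))$.
\end{itemize}
Neither slip affects the argument.
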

We shall also need a rougher estimate for the $L_{2k}$-norms of $F$:
\begin{proposition}\label{prop.stein2k}
For $F$ as in (\ref{eqn.Fmod}) and $k\in \N$ we have
\begin{equation*}
\int_0^1\E|F(\theta)|^{2k}\dd\theta\leq k!.
\end{equation*}
\end{proposition}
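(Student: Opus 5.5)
We fix $\theta$ and bound $\E|F(\theta)|^{2k}$ pointwise, then integrate over $\theta$. Write $F(\theta)=\sum_{j=1}^{N-1}X_jw_j$ with deterministic coefficients $w_j:=\frac1N\wh{1_{[N]}}(\theta-\frac jN)$.

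We expand $|F(\theta)|^{2k}=F^k\overline{F}^k$ and take expectations. Since the $X_j$ are independent Steinhaus variables, $\E X_{j_1}\cdots X_{j_k}\overline{X_{l_1}\cdots X_{l_k}}$ equals $1$ if the multiset $\{j_1,\dots,j_k\}$ equals $\{l_1,\dots,l_k\}$, and $0$ otherwise. Hence
\begin{equation*}
\E|F(\theta)|^{2k}=\sum_{\substack{j_1,\dots,j_k\\ l_1,\dots,l_k}}w_{j_1}\cdots w_{j_k}\overline{w_{l_1}\cdots w_{l_k}}\,1_{\{\text{multisets agree}\}}.
\end{equation*}
Every surviving term is a product of $|w|^2$ factors, so it is nonnegative. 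Given $(j_1,\dots,j_k)$, at most $k!$ tuples $(l_1,\dots,l_k)$ are rearrangements of it. Therefore
\begin{equation*}
\E|F(\theta)|^{2k}\le k!\Big(\sum_j|w_j|^2\Big)^k.
\end{equation*}
This is the standard Khintchine-type moment bound for Steinhaus sums.

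It therefore suffices to show the pointwise bound $S(\theta):=\sum_{j=1}^{N-1}|w_j|^2\le1$ for every $\theta$. We add the missing $j=0$ term and use Parseval on $\Z_N$. For fixed $\theta$, $\wh{1_{[N]}}(\theta-\frac jN)$ is the $\Z_N$-Fourier transform at $j$ of the function $x\mapsto e^{-2\pi i x\theta}$ on $\{1,\dots,N\}$, viewed as a function on $\Z_N$. The finite Parseval identity gives
\begin{equation*}
\sum_{j=0}^{N-1}\Big|\wh{1_{[N]}}\Big(\theta-\frac jN\Big)\Big|^2=N\sum_{x=1}^N|e^{-2\pi i x\theta}|^2=N^2.
\end{equation*}
Dividing by $N^2$ gives $\sum_{j=0}^{N-1}|w_j|^2=1$. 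Dropping the $j=0$ term yields $S(\theta)\le1$.

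Combining, $\E|F(\theta)|^{2k}\le k!$ for every $\theta$, and integrating over $\theta\in[0,1]$ gives the proposition. The only point needing care is the combinatorial count in the moment expansion: terms must be grouped by the multiset of indices, and nonnegativity of each surviving term justifies the overcount by $k!$. The Parseval step is routine once $\wh{1_{[N]}}(\theta-j/N)$ is recognised as a DFT.
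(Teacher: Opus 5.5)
Your proof is correct and is essentially the paper's argument: the paper also bounds $\E|F(\theta)|^{2k}$ pointwise by writing it as $\sum_{a_1+\cdots+a_{N-1}=k}\binom{k}{a_1,\dots,a_{N-1}}^2\prod_j|w_j|^{2a_j}$, bounding one multinomial factor by $k!$ to get $k!\bigl(\sum_j|w_j|^2\bigr)^k$, and then uses $\sum_{j=1}^{N-1}|w_j|^2\le 1$, obtained by interchanging summation, which is the same finite Parseval identity you invoke. Your count of at most $k!$ rearrangements of each tuple is just the unpacked form of that multinomial bound.
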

These two results are proved in \S\ref{sec.stein}.

\subsection{Definition of $A$} The Fourier coefficients $\wh{1_A}(j/N)$ are the Fourier coefficients of the image of $A$ under the quotient map $\Z \rightarrow \Z_N$. We shall take $A$ to be a set of a type introduced by Ruzsa in \cite[Theorem 4.4, p267]{ruz::8} as examples sets that are close to being perfect difference sets.

This construction makes the Fourier coefficients $\wh{1_A}(j/N)$ behave like Gauss sums. Gauss sums have good equidistribution properties -- see \cite[Chapter 9]{Katz:1988aa} -- which is why we expect (\ref{eqn.Fmod}) to provide a good approximation to $f_A(\theta)$. In practice we shall not need the full strength of equidistribution and instead make use of classical Weil bounds.

Turning to the definition, suppose that $p$ is prime and write $N:=p(p-1)$, and suppose that $s$ is a primitive root modulo $p$. Then set
\begin{equation}\label{eqn.ruzsa}
A:=\{x \in [N] : x \equiv pt - (p-1)s^t\text{ for some } t \in [p-1]\},
\end{equation}
which has size $n:=p-1$. In words, $A$ is the lift of the image of the graph of $\Z_{p-1} \rightarrow \Z_p; t \mapsto s^t$ under the isomorphism $\Z_{p-1} \times \Z_p \rightarrow \Z_N; (a,b) \mapsto pa-(p-1)b$.

\subsection{Gauss sums} For $\psi \in \wh{\Z_p}$ and $\chi \in \wh{\Z_p^\times}$ define the Gauss sum
\begin{equation*}
G(\psi,\chi):=\sum_{x \in \Z_p^\times}{\psi(x)\chi(x)}.
\end{equation*}
Let $\chi \in \wh{\Z_p^\times}$ be such that $\chi(s)=\exp(-2\pi i/(p-1))$; and let $\psi_b(x):=\exp(2\pi i bx/p)$. Then
\begin{equation}\label{eqn.above}
\wh{1_A}\left(\frac{pa-(p-1)b}{N}\right)=G(\psi_b,\chi^a) \text{ for all }a,b\in \Z.
\end{equation}
Conventionally we extend multiplicative characters $\gamma$ on $\Z_p^\times$ to $\Z_p$ by setting $\gamma(0)=0$, and this means
\begin{equation*}
G(\psi_b,\chi^a) =\begin{cases} G(\psi_1,\chi^a)\chi^{-a}(b) & \text{ if }p-1 \nmid a \text{ or }p \nmid b\\ p-1 & \text{ if }p-1 \mid a \text{ and }p \mid b\end{cases}.
\end{equation*}
The usual Gauss sum calculation tells us that if $p-1 \nmid a$ then
\begin{equation}\label{eqn.gaussnormal}
\left|G(\psi_1,\chi^a)\right|=\sqrt{p},
\end{equation}
and of course $G(\psi_1,\chi^0)=-1$.

With these definitions we have:
\begin{proposition}\label{prop.moments}
Suppose that $A$ is defined as in (\ref{eqn.ruzsa}) and $F$ as in (\ref{eqn.Fmod}) with $N=p(p-1)$. Then
\begin{equation*}
\int_0^1{|f_A(\theta)|\dd\theta} = \int_0^1{\E{|F(\theta)|}\dd\theta} + o_{p \rightarrow \infty}(1).
\end{equation*}
\end{proposition}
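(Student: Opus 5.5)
The plan is a moment method. I would compare the distribution of $f_A(\theta)$, with $\theta$ uniform on $\T$, against that of $F(\theta)$, with $\theta$ uniform and the $X_j$ random. Concretely, I would aim to show that for each fixed pair $k,l\geq 0$
\begin{equation*}
\int_0^1 f_A(\theta)^k\overline{f_A(\theta)}^l\dd\theta=\int_0^1\E F(\theta)^k\overline{F(\theta)}^l\dd\theta+o_{p\rightarrow\infty}(1).
\end{equation*}
Given this, the conclusion follows by polynomial approximation. Fix $R>0$ and $\epsilon>0$, and choose a polynomial $P(z,\overline{z})$ with $||z|-P(z,\overline{z})|\leq \epsilon$ on $|z|\leq R$. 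Off the disc, the contribution of $|z|$ and of $P$ is controlled by Cauchy--Schwarz against a $2K$-th moment. Proposition \ref{prop.stein2k} gives $\int\E|F|^{2K}\leq K!$, and the moment comparison with $k=l=K$ transfers this to $\int|f_A|^{2K}\leq K!+o(1)$. Letting $p\rightarrow\infty$ and then $R\rightarrow\infty$, $\epsilon\rightarrow 0$ gives the result. Proposition \ref{prop.stein} is not needed here; it only identifies the common limit afterwards.

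To compute moments I would write $w_j:=\frac{1}{\sqrt p}\wh{1_A}(j/N)$ and $D_j(\theta):=\frac1N\wh{1_{[N]}}(\theta-j/N)$, and expand using (\ref{eqn.flarge}). The mixed moment of $f_A$ becomes
\begin{equation*}
\sum_{j_1,\dots,j_k,j_1',\dots,j_l'} w_{j_1}\cdots w_{j_k}\overline{w_{j_1'}\cdots w_{j_l'}}\,K(\mathbf{j},\mathbf{j}'),\qquad K(\mathbf{j},\mathbf{j}'):=\int\prod_i D_{j_i}\prod_i\overline{D_{j_i'}}.
\end{equation*}
The moment of $F$ is the same expression with $w$ replaced by $X$. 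Since $\E X^a\overline{X}^b=0$ unless $a=b$, only the terms in which $\mathbf{j}'$ is a permutation of $\mathbf{j}$ survive for $F$, and for those terms the $X$-product equals $1$.

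I then split the $f_A$ sum into the same diagonal part and an off-diagonal part. By (\ref{eqn.above}) and (\ref{eqn.gaussnormal}), writing $j=pa-(p-1)b$, we have $|w_j|=1$ unless $p\mid b$. In that case $w_j=0$ if $p-1\nmid a$, and $|w_j|=1/\sqrt p$ if $p-1\mid a$. So the exceptional indices form a set of size $O(p)=O(\sqrt N)$ among $N$. Bounding the diagonal terms that involve them uses $\sum_j|D_j(\theta)|^2=O(1)$ together with crude $L^{2k}$ bounds for the kernels. This should show that those terms contribute $o(1)$, so the diagonal parts agree up to $o(1)$.

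The main obstacle is showing that the off-diagonal part is $o(1)$. I would use $w_{pa-(p-1)b}=\frac{1}{\sqrt p}G(\psi_1,\chi^a)\chi^{-a}(b)$. The Gauss sum moduli are $1$, so what must be exploited is the phases $\chi^{-a}(b)$. The kernel $K(\mathbf{j},\mathbf{j}')$ is concentrated, with roughly $|j-j'|^{-1}$ decay, near configurations where the $j$'s pair up with the $j'$'s. First I would truncate to $|j_i-j_{\sigma(i)}'|\leq H$ for a slowly growing $H$, losing $o(1)$ by the decay. After truncation, fixing the differences and summing over the base point, one coordinate $b$ runs over essentially all of $\Z_p$ for each residue of $a$. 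The resulting sum has the form $\sum_b\prod_i\chi^{\mp a_i}(b+h_i)$, which is a complete multiplicative character sum of a rational function. Whenever the pattern is not a genuine permutation, Weil's bound gives $O(k\sqrt p)$ for it instead of the trivial $p$. That saving of $p^{-1/2}$ per configuration should beat the $H^{O(k)}$ configurations counted.

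The delicate points are twofold. One must check that the product of characters is non-degenerate, that is, not a perfect $(p-1)$-th power, exactly when the pattern is off-diagonal. One must also handle the interaction between the $a$- and $b$-coordinates under $j=pa-(p-1)b$, since a small change in $j$ moves both $a$ and $b$. This is where I would spend the most care.
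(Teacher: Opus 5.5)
Your moment-comparison strategy follows the paper's in outline: split into diagonal and off-diagonal parts, then apply the Weil bound after averaging over a common shift of all indices. However, the off-diagonal step as you describe it fails.

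\textbf{The truncation step is false.} You propose truncating to configurations with $|j_i-j'_{\sigma(i)}|$ below a slowly growing cutoff for some $\sigma$, ``losing $o(1)$ by the decay''. This cannot work, because the kernel is not concentrated near pairings in $\ell_1$.

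Already for $k=l=2$, take $\mathbf j=(0,L)$, $\mathbf j'=(0,L')$ and their translates by $h\mathbf 1$. Put $t=N\theta$. Up to a unimodular constant,
$D_L\overline{D_{L'}}=\sin^2(\pi t)/\bigl(N^2\sin(\pi(t-L)/N)\sin(\pi(t-L')/N)\bigr)$.
Near $t=0$, where $|D_0|^2\approx\sinc^2(\pi t)$, this is $\approx\sin^2(\pi t)/(\pi^2LL')$ and does not oscillate. Hence $|K(\mathbf j,\mathbf j')|\approx 1/(2\pi^2NLL')$ for $1\ll L,L'\le N^{2/3}$.

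Now sum over the $N$ translates and over $L\in[H^2,N^{1/3}]$, $L'\in[N^{1/2},N^{2/3}]$; your truncation discards all of these. The total is of order $\log^2N$. Since $|w_j|=1$ for almost all $j$, absolute values give nothing smaller. Even the $|j-j'|^{-1}$ decay you posit is not summable. So these terms need the Weil cancellation as well.

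\textbf{The fix is not to truncate at all.} This is the paper's argument. For \emph{every} pair with $b\neq\pi^*(b')$ for all $\pi$:
use the shift invariance (\ref{eqn.hshift}) to replace the weights by their average $\frac1N\sum_{h}g(j+h\mathbf 1)\overline{g(j'+h\mathbf 1)}$;
bound that average by $O_k(p^{-1/2})$ uniformly;
and pay only the crude bound $\sum_{j,j'}|H(j,j')|=O(N\log N)^{2k}$ coming from (\ref{eqn.dirichletsum2}).
The saving $p^{-1/2}$ beats $\log^{2k}N$, and no counting of configurations is needed.

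\textbf{The degeneracy claim is also false once the truncation is dropped.} You expect the character sum to be non-degenerate exactly when the pattern is off-diagonal. This fails, and an extra ingredient is needed.

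Suppose $b=\pi^*(b')$ but $j\neq\pi^*(j')$. Then the exponents $e_i(u)$ do not depend on $u$, and when $b$-values repeat they can all vanish modulo $p-1$. For example, take $k=2$, $b_1=b_2=b_1'=b_2'$, and $a_1+a_2\equiv a_1'+a_2'$ with $\{a_1,a_2\}\neq\{a_1',a_2'\}$. Then the $t$-sum equals $p-1$ for every shift, so Weil gives nothing.

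The paper handles this class with the separate kernel estimate (\ref{Hjj2}). In this case some $j_i\neq j'_{\pi(i)}$ has $p\mid j_i-j'_{\pi(i)}$, so those frequencies are at least $p$ apart. The $\ell_1$-mass of $H$ on such pairs is then $O_k(p^{-1}N^{2k}\log^{2k}N)$.

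Even when $b$ is not a permutation of $b'$, up to $2k$ shifts $u$ can have $e_i(u)\equiv 0\pmod{p-1}$. These must be accounted for separately.

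\textbf{The reduction from moments to $L_1$ is fine with care.} Your polynomial-approximation route is a valid alternative to the paper's Prokhorov--Carleman argument, provided you fix $\epsilon$ first. Then take $R\gg 1/\epsilon$ and a moment order $m$ comparable to $\deg P$, since $m!/R^{2m}\to\infty$ as $m\to\infty$ for fixed $R$. Mixed moments with $k\neq l$ are unnecessary: polynomials in $|z|^2$ suffice.
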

This is the main work and is proved in \S\ref{sec.proofmoments}. With these results we then have the main theorem:
\begin{theorem}\label{thm.mi}
Suppose that $A$ is defined as in (\ref{eqn.ruzsa}). Then
\begin{equation*}
\|\wh{1_A}\|_1 \geq (c_\mathcal{R} - o_{n \rightarrow\infty}(1))\sqrt{n} \text{ where }c_\mathcal{R}=\int_0^1{\E{\left|\sum_{m\in \Z}{Y_m\sinc (\pi(s-m))}\right|}\dd s} =0.953\dots.
\end{equation*}
\end{theorem}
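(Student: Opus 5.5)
The theorem follows from Propositions \ref{prop.stein} and \ref{prop.moments} together with the triangle inequality. The only remaining work is to control the subtracted Dirichlet-type term in the definition of $f_A$ and to pass between $p$ and $n=p-1$.

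First, by the definition (\ref{eqn.flarge}) of $f_A$ and the triangle inequality in $L_1(\T)$,
\begin{equation*}
\frac{1}{\sqrt{p}}\|\wh{1_A}\|_1 \geq \|f_A\|_1 - \frac{n}{\sqrt{p}N}\|\wh{1_{[N]}}\|_1 .
\end{equation*}
Here $\wh{1_{[N]}}$ is a (modulated) Dirichlet kernel, so $\|\wh{1_{[N]}}\|_1 = O(\log N)$. Since $n=p-1$ and $N=p(p-1)$, the error term is
\begin{equation*}
\frac{n}{\sqrt{p}N}\|\wh{1_{[N]}}\|_1 = O\left(\frac{\log p}{p^{3/2}}\right)=o_{p\rightarrow\infty}(1).
\end{equation*}

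Second, Proposition \ref{prop.moments} gives $\|f_A\|_1=\int_0^1\E|F(\theta)|\dd\theta + o(1)$. Proposition \ref{prop.stein} then identifies the limit of the right-hand side as $c_\mathcal{R}$, since $N\rightarrow\infty$ as $p\rightarrow\infty$. Combining, we get $\|\wh{1_A}\|_1 \geq (c_\mathcal{R}-o(1))\sqrt{p}$.

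Finally, $\sqrt{p}\geq\sqrt{n}$, and $n\rightarrow\infty$ exactly when $p\rightarrow\infty$, which yields the stated bound.

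The numerical value $c_\mathcal{R}=0.953\dots$ is not needed for the inequality itself. I would justify it separately by truncating the sum over $m$ to $|m|\leq M$. Since $\sum_m \sinc^2(\pi(s-m))=1$, the $L_2$ tail is bounded by $\bigl(\sum_{|m|>M}\sinc^2(\pi(s-m))\bigr)^{1/2}=O(M^{-1/2})$, and this dominates the $L_1$ error. The resulting finite expectation can then be evaluated numerically.

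There is no real obstacle here: the substance lies in Proposition \ref{prop.moments}, which is taken as given. The only point needing care is the uniform $O(\log N)$ bound on the $L_1$ norm of the Dirichlet kernel. This is standard, e.g.\ via $|\wh{1_{[N]}}(\theta)|\leq\min(N,1/(2\|\theta\|))$.
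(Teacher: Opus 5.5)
Your proposal is correct and follows the paper's proof. Propositions \ref{prop.moments} and \ref{prop.stein} give $\|f_A\|_1 \geq c_\mathcal{R} - o(1)$, and the triangle inequality applied to (\ref{eqn.flarge}), together with the $O(\log N)$ bound on the $L_1$-norm of the Dirichlet kernel, absorbs the subtracted term; your explicit error $O(p^{-3/2}\log p)$ and the passage from $\sqrt{p}$ to $\sqrt{n}$ simply spell out details the paper leaves implicit.
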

\begin{proof}
Recall $N=p(p-1)$ and $n=p-1$. From Propositions \ref{prop.moments} \& \ref{prop.stein} we have $\|f_A\|_1\geq c_\mathcal{R}-o_{p \rightarrow \infty}(1)$. Since $\|\wh{1_{[N]}}\|_1=O(\log N)$ \cite[(12.1), Chapter II, Vol.~1, p.~67]{zyg::}, we then get the result by the triangle inequality applied to the definition of $f_A$ in (\ref{eqn.flarge}).
\end{proof}

\subsection{The proof of Proposition \ref{prop.moments}}\label{sec.proofmoments}
The aforementioned bound of Weil \cite{wei::1} that we shall use is:
\begin{theorem}[Weil bound] \label{thm.weilbound}
Suppose that $p$ is prime, $\chi \in \wh{\Z_p^\times}$ and $f(t)=\prod_{i=1}^d{(t-\xi_i)^{a_i}}$ where $\xi_1, \dots, \xi_d \in \Z_p$ are distinct, $a_1,\dots,a_d \in \Z$, and at least one $a_i$ is not a multiple of the order of $\chi$. Then
\begin{equation*} \left|\sum_{t \in \Z_p}{\chi(f(t))}\right| \leq (d-1)\sqrt{p}
\end{equation*} where we extend $\chi$ by taking $\chi(0)=\chi(\infty)=0$.
\end{theorem}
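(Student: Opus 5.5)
Since the paper cites this as a classical theorem of Weil, the plan is to reconstruct the standard proof via $L$-functions. It has two parts. First, a purely algebraic argument shows that the generating function of the character sums over all finite extensions of $\Z_p$ is a polynomial of degree at most $d-1$. Second, a Riemann-hypothesis-type estimate shows that its inverse roots all have modulus at most $\sqrt{p}$. Taking the first coefficient then gives the stated bound, since the sum in the theorem equals minus the sum of these $d-1$ (or fewer) inverse roots.

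\textbf{Preliminary reduction.} Let $m$ be the order of $\chi$. Reduce each $a_i$ modulo $m$ and discard the $i$ with $m \mid a_i$. This changes neither the hypothesis nor the values $\chi(f(t))$ at points $t \neq \xi_i$. At the discarded $\xi_i$ the value changes from $0$ to something of modulus $1$; I would either absorb this small discrepancy or, more cleanly, note that $d$ only decreases. So we may assume $1 \leq a_i < m$, and with the convention $\chi(0)=\chi(\infty)=0$ the sum counts only $t \notin \{\xi_i\}$. For $r \geq 1$ put $\chi_r := \chi \circ N_{\mathbb{F}_{p^r}/\mathbb{F}_p}$ and $S_r := \sum_{t \in \mathbb{F}_{p^r}} \chi_r(f(t))$. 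Define
\begin{equation*}
L(T) := \sum_{g \text{ monic}} \chi\Bigl(\prod_{i=1}^d g(\xi_i)^{a_i}\Bigr) T^{\deg g},
\end{equation*}
where the sum is over monic $g \in \mathbb{F}_p[t]$. This function is multiplicative in $g$. Its Euler product over monic irreducibles $P$, combined with $\chi(\prod_i P(\xi_i)^{a_i}) = \chi_{\deg P}(f(\alpha))^{\pm 1}$ for a root $\alpha$ of $P$, gives $T\frac{d}{dT}\log L(T) = \sum_{r} S_r T^r$, up to conjugating $\chi$ (which is harmless for absolute values).

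\textbf{Rationality.} For $k \geq d$, the map $g \mapsto (g(\xi_1), \dots, g(\xi_d))$ from monic polynomials of degree $k$ onto $\mathbb{F}_p^d$ is $p^{k-d}$-to-one, by the Chinese remainder theorem. Hence the coefficient of $T^k$ factorises as $p^{k-d}\prod_i \sum_{y\in\mathbb{F}_p}\chi(y)^{a_i}$, and this vanishes because $\chi^{a_i}$ is nontrivial for some $i$. So $L$ is a polynomial of degree at most $d-1$. Write $L(T)=\prod_{j=1}^{d-1}(1-\omega_j T)$, allowing $\omega_j=0$. Then $S_r = -\sum_j \omega_j^r$ for all $r$, and in particular $|S_1| \leq (d-1)\max_j|\omega_j|$.

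\textbf{Riemann hypothesis (main obstacle).} It remains to show $|\omega_j| \leq \sqrt{p}$. The classical route is to identify $L$ as a factor of the zeta function of the Kummer curve $y^m = \prod_i (t-\xi_i)^{a_i}$ and invoke Weil's theorem for curves. To stay closer to elementary methods I would instead try Stepanov's polynomial method. This directly gives $|S_r| \leq C_{d,m}\, p^{r/2}$ for all $r$ large, via an auxiliary polynomial vanishing to high order at the points where $f(t)^{(p^r-1)/m}$ takes a prescribed value. Once that is in hand, the standard trick finishes: if some $|\omega_j| > \sqrt{p}$, then $\sum_j \omega_j^r$ cannot be $O(p^{r/2})$ for all $r$. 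This follows by considering the power series $\sum_r (\sum_j \omega_j^r) T^r$, which is rational with poles at $\omega_j^{-1}$, and noting that it would converge for $|T|<p^{-1/2}$. Hence every $|\omega_j|\leq\sqrt{p}$, and so $|S_1|\leq (d-1)\sqrt{p}$. I expect constructing the auxiliary polynomial in Stepanov's argument, and checking that it is nonzero, to be the technical heart of the proof. For the purposes of this paper it is reasonable to simply cite \cite{wei::1} for that step.
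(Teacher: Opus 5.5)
The paper does not prove this statement. It quotes Weil's bound as a black box and uses only its conclusion, in the claim leading to \eqref{eqn.weilaverage}. Your proposal is a correct outline of the standard $L$-function proof, so the comparison is between citing and reconstructing.

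\textbf{What your route buys.} It gives an elementary proof of the part that produces the exact constant. Your interpolation/CRT count, showing that the coefficient of $T^k$ vanishes for $k\ge d$, is complete. It gives $\deg L\le d-1$, and so $S_1$ is, up to a unimodular factor, minus the sum of at most $d-1$ inverse roots. It also shows why a lossy Stepanov bound $C_{d,m}p^{r/2}$, valid only for large $r$, upgrades to the sharp $(d-1)\sqrt p$ via the radius-of-convergence argument.

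\textbf{What it does not buy.} It is not independent of deep input. The estimate $|\omega_j|\le\sqrt p$ is the entire Riemann-hypothesis content. Since you end by citing Weil for it, your argument is no more self-contained than the paper's unless you actually carry out the Stepanov step.

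\textbf{Three small fixes.}
\begin{enumerate}[label=(\roman*)]
\item The link between $\chi(\prod_i P(\xi_i)^{a_i})$ and $\chi_{\deg P}(f(\alpha))$ is neither an inversion nor a conjugation. Since $N(\alpha-\xi_i)=(-1)^{\deg P}P(\xi_i)$, the two differ by the sign $\chi(-1)^{\deg P\sum_i a_i}$. Hence $\sum_r S_rT^r=T\frac{d}{dT}\log L(\epsilon T)$ with $\epsilon=\chi(-1)^{\sum_i a_i}$, and $S_r=-\epsilon^r\sum_j\omega_j^r$. This is harmless for absolute values.
\item In the preliminary reduction, ``$d$ only decreases'' is not enough by itself, because the sum also changes at the $e$ discarded points $\xi_i$. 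The correct accounting is $|S_1|\le (d-e-1)\sqrt p+e\le (d-1)\sqrt p$, using that at least one index survives.
\item If you run Stepanov's method, you need $y^m=f(t)$ to be absolutely irreducible, i.e.\ $\gcd(m,a_1,\dots,a_d)=1$. Otherwise the per-value upper bound on $\#\{t: f(t)^{(q-1)/m}=c\}$ fails. For example, with $m=4$ and $f=h^2$ the values lie in $\{\pm1\}$, each taken about $q/2$ times. So first put $\ell:=\gcd(m,a_1,\dots,a_d)$ and $h:=\prod_i(t-\xi_i)^{a_i/\ell}$, and replace $(\chi,f)$ by $(\chi^{\ell},h)$. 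This changes neither the sum, nor $L$, nor $d$, and some $a_i/\ell$ is still not a multiple of $m/\ell$.
\end{enumerate}
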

To prove Proposition \ref{prop.moments} we shall first show that the relevant $L_{2k}$-norms are equal (in the limit as $p \rightarrow \infty$) -- this is Proposition \ref {prop.moments2k} below -- and then use the fact that these are also sufficiently small (Proposition \ref{prop.stein2k}) so that their limit arises from a unique distribution, which means the $L_1$-norms are also the same in the limit. We shall see the details of this later; the heart of the whole argument is:
\begin{proposition}\label{prop.moments2k}
Suppose that $A$ is defined as in (\ref{eqn.ruzsa}), $F$ as in (\ref{eqn.Fmod}), $N=p(p-1)$, and $k\in \N$. Then 
\begin{equation}\label{eqn.mainest2}
\int_0^1{|f_A(\theta)|^{2k}\dd\theta} = \int_0^1{\E|F(\theta)|^{2k}\dd\theta}+o_{k;p\to\infty}(1).
\end{equation}
\end{proposition}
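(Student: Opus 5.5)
Both sides are trigonometric sums of the same shape: $f_A$ is obtained from $F$ by replacing $X_j$ with $g_j:=\frac{1}{\sqrt p}\wh{1_A}(j/N)$. So the plan is to expand both $2k$-th moments as finite multilinear sums in the coefficients and compare them term by term.

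\textbf{Expansion.} Write $D(\theta):=\wh{1_{[N]}}(\theta)$ and
\begin{equation*}
K(j_1,\dots,j_k;j_1',\dots,j_k'):=\frac{1}{N^{2k}}\int_0^1\prod_{r=1}^k D\Bigl(\theta-\frac{j_r}{N}\Bigr)\overline{D\Bigl(\theta-\frac{j_r'}{N}\Bigr)}\dd\theta .
\end{equation*}
Expanding $|\cdot|^{2k}$ in (\ref{eqn.flarge}) and (\ref{eqn.Fmod}) gives
\begin{equation*}
\int_0^1|f_A|^{2k} = \sum_{\mathbf j,\mathbf j'} K(\mathbf j;\mathbf j')\prod_{r} g_{j_r}\overline{g_{j_r'}},
\qquad
\int_0^1\E|F|^{2k} = \sum_{\mathbf j,\mathbf j'} K(\mathbf j;\mathbf j')\,\E\prod_{r} X_{j_r}\overline{X_{j_r'}} .
\end{equation*}
The Steinhaus expectation is $1$ when the multiset $\{j_r\}$ equals $\{j_r'\}$, and $0$ otherwise.

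\textbf{Diagonal terms.} Write $j=pa-(p-1)b$ via (\ref{eqn.above}). Since $1\le j\le N-1$, the pair $(a \bmod p-1,\ b \bmod p)$ is not $(0,0)$. The Gauss sum facts then give $|g_j|=1$ except when $p-1\mid a$ (with $p\nmid b$), where $|g_j|=1/\sqrt p$. There are $O(p)$ such exceptional $j$ out of $N\approx p^2$.

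When the multisets agree, the product $\prod g_{j_r}\overline{g_{j_r'}}=\prod|g_{j_r}|^2$ equals $1$ unless some index is exceptional. The excluded contribution is controlled using
\begin{equation*}
\sum_{\mathbf j,\mathbf j'}|K(\mathbf j;\mathbf j')| \le \int_0^1\Bigl(\frac1N\sum_j|D(\theta-j/N)|\Bigr)^{2k}\dd\theta = O_k\bigl((\log N)^{2k}\bigr).
\end{equation*}
This bound is too lossy as it stands, so I would refine it. The natural weight is $\frac1N\sum_j |D(\theta-j/N)|^2\cdot\ldots$; by Hölder it is better to bound the diagonal part by $\int_0^1\bigl(\frac1{N^2}\sum_j|D(\theta-j/N)|^2\bigr)^k$. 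Note that $\frac{1}{N^2}\sum_j|D(\theta-j/N)|^2=O(1)$ uniformly, since $D$ decays like $1/\|\theta\|$ at spacing $1/N$. Restricting one index to the $O(p)$ exceptional ones gives a saving of $O(\log N/p)$ or better. Hence diagonal terms match up to $o_k(1)$.

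\textbf{Off-diagonal terms (main obstacle).} Here I must show that
\begin{equation*}
\sum_{\mathbf j,\mathbf j'\ \text{off-diag}} K(\mathbf j;\mathbf j')\prod_r g_{j_r}\overline{g_{j_r'}} = o_k(1).
\end{equation*}
For $p\nmid b$ we have $g_j=\frac{1}{\sqrt p}G(\psi_1,\chi^a)\chi^{-a}(b)$, so the products factor as
\begin{equation*}
\Bigl(\text{Gauss-sum phases depending on } a\text{'s}\Bigr)\cdot \chi\Bigl(\prod_r b_r^{-a_r}\prod_r b_r'^{\,a_r'}\Bigr).
\end{equation*}
The idea is to exploit cancellation in the $b$-variables via the Weil bound (Theorem \ref{thm.weilbound}). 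Fix the $a$'s and the \emph{shifts} $b_r=b+h_r$, $b_r'=b+h_r'$ relative to a common $b$. The kernel $K$ depends essentially only on differences $j_r-j_{r'}$, and it decays in them like a product of $1/(1+|\Delta|)$ factors. Summing over the common base point $t=b\in\Z_p$ produces $\sum_t\chi(f(t))$ with
\begin{equation*}
f(t)=\prod_r (t+h_r)^{-a_r}\prod_r(t+h_r')^{a_r'} .
\end{equation*}
Unless every exponent (after merging equal roots) is divisible by the order of $\chi$ — which I expect to force the multiset equality, i.e.\ the diagonal case — this sum is $O_k(\sqrt p)$ rather than $p$. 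That gives a saving of $p^{-1/2}$ against the trivial bound.

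The delicate point is organising the sum. The common translation in $j$ must correspond to a translation of $t$ alone, with the $a$-coordinates held fixed, since translating $j$ by $-(p-1)$ shifts $b$ by $1$. The remaining shape variables must be summed using the kernel's decay, costing only polylogarithmic factors; that is, $N^{-2k}\int\prod|D|$ summed over shapes is $O((\log N)^{2k})$ per unit base point. Cases where some $p\mid b_r$ or where $a_r\equiv 0$ are exceptional and handled by trivial bounds, as in the diagonal step.

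Combining gives an off-diagonal total of $O_k((\log p)^{2k}/\sqrt p)=o_k(1)$, which proves (\ref{eqn.mainest2}). The hardest step is this off-diagonal one: checking that the non-degeneracy condition of the Weil bound holds exactly off the diagonal, and that the kernel sums lose only logarithms.
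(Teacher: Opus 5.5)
\textbf{There is a genuine gap in the off-diagonal step, at the point you flag.} You hold the $a$-coordinates fixed and average only over the $b$-translation $t$. You then expect that Weil-degeneracy (every merged exponent $\equiv 0 \bmod p-1$) forces $\{j_r\}=\{j_r'\}$. It does not. Two $k=2$ examples:
\begin{enumerate*}[label=(\roman*)]
\item take $a=a'=(1,-1)$, $b=(\beta,\beta)$, $b'=(\beta',\beta')$ with $\beta\neq\beta'$. Every summand $\chi^{-1}(\beta+t)\chi(\beta+t)\overline{\chi^{-1}(\beta'+t)\chi(\beta'+t)}$ equals $1$ for $t\notin\{-\beta,-\beta'\}$, so the $t$-sum is $p-2$;
\item take $a=(1,4)$, $a'=(2,3)$, $b=b'=(\beta,\beta)$. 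The single exponent is $5-5=0$ and the $t$-sum is $p-1$.
\end{enumerate*}
Both families are off-diagonal and contain no index with $a_r\equiv 0$, yet they get no cancellation at all. Your trivial bounds for ``exceptional'' indices do not touch them. Nothing in your argument shows that their total $H$-mass is $o(N^{2k})$; a general appeal to decay of the kernel is not enough, since the full $\ell_1$-mass of $H$ is of order $N^{2k}\log^{2k}N$.

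\textbf{What the paper uses instead.} It has two ingredients you are missing.
\begin{enumerate*}[label=(\arabic*)]
\item It averages over the full shift $h=pu-(p-1)t\in\Z_N$, under which $H$ is invariant, so the $a$-coordinates move as well. At a root with multiplicity imbalance $m_i\neq 0$ the exponent becomes $e_i(0)+m_iu$, which vanishes mod $p-1$ for only $O(k)$ values of $u$. This handles every configuration whose $b$-multisets differ, such as family (i).
\item When $b=\pi^*(b')$ but $j\neq\pi^*(j')$, the exponents do not depend on $u$ and can all vanish, as in family (ii), so no character-sum bound helps. Here some $i$ has $j_i\neq j'_{\pi(i)}$ and $p\mid j_i-j'_{\pi(i)}$. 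The claim (\ref{Hjj2}) shows that the $\ell_1$-mass of $H$ over such close pairs is $O_k(p^{-1}N^{2k}\log^{2k}N)$. That rests on $\int_0^1|\wh{1_{[N]}}(\theta)||\wh{1_{[N]}}(\theta-\frac{x}{p-1})|\dd\theta=O(p\log N/d(x))$, which has no counterpart in your proposal.
\end{enumerate*}

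If you want to keep $a$ fixed, you would have to show something else. Every Weil-degenerate off-diagonal configuration must contain either a close pair (now possibly inside $j$ or inside $j'$) or an index with $m a_r\equiv 0 \bmod p-1$ for some $1\le |m|\le k$. You would then bound the $H$-mass of both classes, which again needs the close-pair estimate.

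Your diagonal step is fine, and using $H\geq 0$ on the diagonal is slightly sharper than the paper. One correction: when $p\mid j$ you have $g_j=0$, not $|g_j|=1$. This is harmless, since those indices are also only $O(p)$ in number.
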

\begin{proof}
Write
\begin{equation}\label{eqn.Hjj}
H(j,j') :=\int_0^1{\prod_{i=1}^k\wh{1_{[N]}}\left(\theta-\frac{j_i}{N}\right) \overline{\prod_{i=1}^{k}\wh{1_{[N]}}\left(\theta-\frac{j_i'}{N}\right)} \dd\theta} \text{ for }j,j' \in\Z^{k}.
\end{equation}
The definition is invariant under translation of any $j_i$ (or $j_i'$) by a multiple of $N$, and so we also think of $H$ as a function on $\Z_N^k\times \Z_N^k$. 

From the definition of (\ref{eqn.Fmod}) and the fact that $\E{X_i^r\overline{X_i}^s} =1_{[r=s]}$, we have
\begin{equation}\label{eqn.Fmomentexpansion}
\int_0^1{\E{|F(\theta)|^{2k}}\dd\theta} = \frac1{N^{2k}} \sum_{\substack{j,j' \in (\Z_N^*)^k\\ \exists \pi \in S_k \text{ s.t.\ }j=\pi^*(j')}}{H(j,j')}.
\end{equation}
Here, we have written $\pi^*(j')$ to mean the vector whose $i$th component is $j_{\pi(i)}'$ . Another way of saying $j=\pi^*(j')$ is that $\{j_1,\dots,j_k\}=\{j_1',\dots,j_k'\}$ as multisets.

Note that $j$ and $j'$ are taken in $(\Z_N^*)^k$, where $\Z_N^*:=\Z_N\setminus \{0\}$, rather than $\Z_N^k$ because the sum in (\ref{eqn.Fmod}) is over non-zero residues.

To understand $H$ we will use two standard estimates:
\begin{equation}\label{eqn.dirichletbound2}
|\wh{1_{[N]}}(\theta)| = O\left(\min\left(N,\frac1{\|\theta\|_{\T}}\right)\right) \text{ where }\|\theta\|_\T:=\min\{|\theta-z|: z \in \Z\},
\end{equation}
and hence
\begin{equation}\label{eqn.dirichletsum2}
\sum_{j \in \Z_N}{\left|\wh{1_{[N]}}\left(\theta-\frac{j}{N}\right)\right|} =O( N\log N).
\end{equation}
Later, in (\ref{eqn.fA}), we shall see that $\|f_A\|_{L_{2k}}^{2k}$ is a weighted sum of $H(j,j')$. Most of the diagonal terms, meaning those pairs $(j,j')$ with $j=\pi^*(j')$ for some $\pi \in S_k$, get weight $1$ which gives a contribution of the same size as (\ref{eqn.Fmomentexpansion}). The off-diagonal terms get small weight and to show that this makes their total contribution small we need the $\ell_1$-norm of $H$ to be small. This is what the next two claims show in a form that is useful to us.
\begin{claim*}
For $r \in [k]$ we have
\begin{equation}\label{Hjj}
\sum_{\substack{j,j' \in \Z_N^k\\  p \mid j_r}}{|H(j,j')|} ,\sum_{\substack{j,j' \in \Z_N^k\\  p-1 \mid j_r}}{|H(j,j')|} = O_k(p^{-1}N^{2k}\log^{2k} N).
\end{equation}
\end{claim*}
\begin{proof}
This is two separate, but similar estimates. We start by proving the upper bound for the first sum. The proof of the second is the same with the roles of $p$ and $p-1$ swapped. Use (\ref{eqn.dirichletsum2}) in the $2k-1$ variables $j_1,\dots,j_{r-1},j_{r+1},\dots,j_k,j_1',\dots,j_k'$ and write $j_r=px$ to get
\begin{equation*}
\sum_{\substack{j,j' \in \Z_N^k\\  p \mid j_r}}{|H(j,j')|} = O(N\log N)^{2k-1}\sum_{x \in \Z_{p-1}}{\int_0^1{\left|\wh{1_{[N]}}\left(\theta-\frac{x}{p-1}\right)\right|\dd \theta}}.
\end{equation*}
The integral is then the $L_1$-norm of the Dirichlet kernel (after a change of variables $\theta-\frac{x}{p-1} \mapsto \theta$) and so is $O(\log N)$, and $|\Z_{p-1}|=p-1=\frac{N}{p}$. Combining all this gives the claim.
\end{proof}
Similarly, but harder, we have:
\begin{claim*}
For $r,s \in [k]$ we have
\begin{equation}\label{Hjj2}
\sum_{\substack{j,j' \in \Z_N^k\\  j_r \neq j_s' \text{ and }p \mid j_r-j_s'}}{|H(j,j')|} = O_k(p^{-1}N^{2k}\log^{2k} N).
\end{equation}
\end{claim*}
\begin{proof}
Write $S$ for the sum on the left in (\ref{Hjj2}). By (\ref{eqn.dirichletsum2}) in the $2k-2$ variables $j_1,\dots,j_{r-1},j_{r+1},\dots,j_k,j_1',\dots,j_{s-1}',j_{s+1}',\dots j_k'$ we have
\begin{equation*}
S= O(N\log N)^{2k-2}\cdot\sum_{\substack{j_r,j_s' \in \Z_N\\ j_r \neq j_s'\text{ and }p \mid j_r-j_s'}}{\int_0^1{ \left|\wh{1_{[N]}}\left(\theta - \frac{j_r}{N}\right)\right|\left|\wh{1_{[N]}}\left(\theta - \frac{j_s'}{N}\right)\right|\dd \theta}}.
\end{equation*}
By change of variables we have
\begin{equation*}
\int_0^1{ \left|\wh{1_{[N]}}\left(\theta - \frac{j_r}{N}\right)\right|\left|\wh{1_{[N]}}\left(\theta - \frac{j_s'}{N}\right)\right|\dd \theta} = \int_0^1{ \left|\wh{1_{[N]}}\left(\theta\right)\right|\left|\wh{1_{[N]}}\left(\theta - \frac{j_r-j_s'}{N}\right)\right|\dd \theta}.
\end{equation*}
Hence, writing $px=j_r-j_s'$, we have
\begin{align}\label{eqn.Su}
S& =O(N\log N)^{2k-2} \cdot N \cdot \sum_{x\in \Z_{p-1}^*}{\int_0^1{|\wh{1_{[N]}}(\theta)|\left|\wh{1_{[N]}}\left(\theta - \frac{x}{p-1}\right)\right|\dd \theta}}.
\end{align}
Now split the range of integration into
\begin{equation*}
 I_1:=\{\theta \in [0,1]: \|\theta\|_\T\leq \frac1{2(p-1)}\},
 \end{equation*}
 \begin{equation*}
 I_2:=\left\{\theta\in [0,1]: \left\|\theta - \frac x{p-1}\right\|_\T\leq \frac1{2(p-1)}\right\},
 \end{equation*}
 and 
 \begin{equation*}
 I_3:=[0,1]\setminus (I_1 \cup I_2).
 \end{equation*}
 For $x \in \Z_{p-1}^*$ write $d(x):=(p-1)\left\|\frac{x}{p-1}\right\|_\T$. By the triangle inequality for $\|\cdot \|_\T$, if $\theta \in I_1$ then $\left\|\theta - \frac x{p-1}\right\|_\T\geq \frac{d(x)}{2(p-1)}$, and similarly if $\theta \in I_2$ then $\left\|\theta \right\|_\T\geq \frac{d(x)}{2(p-1)}$. It follows from (\ref{eqn.dirichletbound2}) that
\begin{align}
\nonumber \int_{I_1}{|\wh{1_{[N]}}(\theta)|\left|\wh{1_{[N]}}\left(\theta - \frac{x}{p-1}\right)\right|\dd \theta}&  = O\left(\frac{2(p-1)}{d(x)}\right)\cdot \int_{I_1}{\min\left\{N,\frac{1}{\|\theta\|_\T}\right\}\dd \theta}\\ \label{eqn.I1e} &=O\left(\frac{p}{d(x)}\log N\right),
\end{align}
and a similar estimate for the integral over $I_2$. Finally, the triangle inequality tells us that $\left\| \frac x{p-1}\right\|_\T\leq \|\theta\|_\T+\left\|\theta - \frac x{p-1}\right\|_\T$, which rearranges to
\begin{equation*}
\frac1{\|\theta\|_\T\left\|\theta - \frac x{p-1}\right\|_\T} \leq \frac{1}{\left\| \frac x{p-1}\right\|_\T}\left(\frac{1}{\|\theta\|_\T} + \frac{1}{\left\|\theta - \frac x{p-1}\right\|_\T}\right).
\end{equation*}
Hence by (\ref{eqn.dirichletbound2}) we have
\begin{align}\nonumber 
\int_{I_3}{|\wh{1_{[N]}}(\theta)|\left|\wh{1_{[N]}}\left(\theta - \frac{x}{p-1}\right)\right|\dd \theta}& =O\left(\frac{p-1}{d(x)}\right)\left(\int_{I_3}{\frac{1}{\|\theta\|_\T}\dd\theta} +\int_{I_3}{\frac{1}{\left\|\theta-\frac{x}{p-1}\right\|_\T}\dd\theta} \right)\\ \label{eqn.I3e} &= O\left(\frac{p}{d(x)}\log p\right)=O\left(\frac{p}{d(x)}\log N\right).
\end{align}
Inserting the bounds from (\ref{eqn.I1e}), its analogue for $I_2$ in place of $I_1$, and (\ref{eqn.I3e}) into (\ref{eqn.Su}) gives
\begin{align*}
S& =O(N\log N)^{2k-2} \cdot N \cdot \sum_{x\in \Z_{p-1}^*}{O\left(\frac{p}{d(x)}\log N\right)}.
\end{align*}
Summing over $x \in \Z_{p-1}^*$, $d(x)$ takes each of the values $1,2,\dots,\frac{p-1}{2}$ twice, so the sum is $O(p\log N\log p)=O(p^{-1}N\log^2 N)$ which gives the claim.
\end{proof}
Now we turn to the $L_{2k}$-norm of $f_A$. For $j \in \Z_N^k$ write
\begin{equation*}
g(j) :=\begin{cases} \prod_{i=1}^k {\frac{\wh{1_A }\left(\frac{j_i}{N}\right)}{\sqrt p}} & \text{ for }j \in (\Z_N^*)^k\\ 0 & \text{ otherwise}.\end{cases}
\end{equation*}
Given $j \in \Z_N^k$ there are unique $a \in \Z_{p-1}^k$ and $b \in \Z_p^k$ such that $j_i \equiv pa_i - (p-1)b_i \bmod N$ for all $i \in [k]$ (and similarly for $j'$, $a'$, and $b'$). This is the same instance of the Chinese Remainder Theorem implicitly mentioned after the definition (\ref{eqn.ruzsa}). Then
\begin{equation}\label{eqn.expg}
g(j)=p^{-\frac k2}\prod_{i=1}^k{G(\psi_1,\chi^{a_i})\chi^{-a_i}(b_i)} \text{ for all }j \in \Z_N^k.
\end{equation}
To see this, (\ref{eqn.above}) deals with $j \in \Z_N^*$, and for $j \in \Z_N^k \setminus (\Z_N^*)^k$ we use the fact that $\chi^{-0}(0)=0$ by definition. Note that $\chi$ in (\ref{eqn.expg}) is the particular choice of $\chi$ just before (\ref{eqn.ruzsa}), \emph{i.e.\ }$\chi(s)=\exp(-2\pi i/(p-1))$ where $s$ is a primitive root modulo $p$, fixed for our definition of $A$ just before (\ref{eqn.ruzsa}).

The function $g$ arises from (\ref{eqn.flarge}) which gives
\begin{equation}\label{eqn.fA}
\int_0^1|f_A(\theta)|^{2k}\dd\theta
=
\frac1{N^{2k}}
\sum_{j,j' \in \Z_N^k}{g(j)\overline{g(j')}H(j,j')}.
\end{equation}

Recalling the definition of $H$ in (\ref{Hjj}), the change of variables $\theta \mapsto \theta + \frac{h}{N}$ shows that
\begin{equation}\label{eqn.hshift}
H(j+h\textbf 1, j'+h\textbf 1)=H(j,j') \text{ for all }h \in \Z_N, j,j' \in \Z_N^k,
\end{equation}
where $\textbf 1 = (1,\dots,1)$. Let 
\begin{equation*}
\mathcal{S}:=\{(j,j') \in \Z_N^k\times \Z_N^k: \exists \pi \in S_k \text{ s.t.\ }b=\pi^*(b')\}.
\end{equation*}
Note that $\mathcal{S}$ is closed under $(j,j')\mapsto (j+h\textbf 1,j'+h\textbf 1)$, and hence so is $\mathcal{S}^c=\Z_N^k \times \Z_N^k \setminus \mathcal{S}$.
\begin{claim*}
Either there is $\pi \in S_k$ such that $b=\pi^*(b')$; or else for at least a proportion $1-O(kp^{-1})$ of the $u\in \Z_{p-1}$ we have
\begin{equation}\label{eqn.weilaverage}
\left| \sum_{t\in\Z_p} \prod_{i=1}^k \chi^{-a_i-u}(b_i+t) \overline{\prod_{i=1}^{k}\chi^{-a_i'-u}(b_i'+t)}\right|=O(k\sqrt{p}).
\end{equation}
\end{claim*}
\begin{proof}
To see this we will apply the Weil bound (Theorem \ref{thm.weilbound}) using that $\chi$ has order $p-1$. First, let $\xi_1,\dots,\xi_d$ be the distinct values of $-b_1,\dots,-b_k,-b_1',\dots,-b_k'$. The summand in (\ref{eqn.weilaverage}) then has the form $\chi(R(t))$ where $R(t)$ is a product of $d \leq 2k$ factors $(t-\xi_i)^{e_i}$ and
\begin{equation*}
e_i(u):=\sum_{l \in [k]:-b_l'=\xi_i}{(a_l'+u)}-\sum_{l \in [k]:-b_l=\xi_i}{(a_l+u)} .
\end{equation*}
Suppose that there is no $\pi \in S_k$ such that $b=\pi^*(b')$. Then there is some $i \in [d]$ such that $m_i:=|\{l \in [k]: -b_l'=\xi_i\}| - |\{l \in [k]: -b_l=\xi_i\}|$ has $m_i \neq 0$. If $e_i(u) \not \equiv 0 \bmod{p-1}$ then by Theorem \ref{thm.weilbound} we have (\ref{eqn.weilaverage}) for that particular $u$, so set
\[E:=\{u \in \Z_{p-1}: e_i(u) \equiv 0 \bmod{p-1}\}.\]
If $|E|>2k$ then there are $u,u' \in E$ with $u \neq u'$ and $\left\|\frac{u-u'}{p-1}\right\|_\T<\frac 1{2k}$. Since $|m_i| \leq k$, we have
\begin{equation*}
0<\left\|m_i\frac{u-u'}{p-1}\right\|_\T=\left\|\frac{e_i(u)-e_i(u')}{p-1}\right\|_\T=0
\end{equation*}
which is a contradiction. Hence $|E| \leq 2k$, and the result is proved.\end{proof}

Suppose that $(j,j') \in \mathcal{S}^c$. Writing $h=pu-(p-1)t$ for $h\in \Z_N$, $u \in \Z_{p-1}$ and $t \in \Z_p$ we have
\begin{align*}
\sum_{ h \in \Z_N}{g(j+h\textbf1)\overline{g(j'+h\textbf 1)}} & = p^{-k}\sum_{u \in \Z_{p-1}}{\prod_{i=1}^k{G(\psi_1,\chi^{a_i+u})\overline{G(\psi_1,\chi^{a_i'+u})}}}\\ & \qquad \qquad \qquad \times  \sum_{t \in \Z_p}{\prod_{i=1}^k\chi^{-a_i-u}(b_i+t)\overline{\chi^{-a_i'-u}(b_i'+t)}}.
\end{align*}
Inserting the bound $|G(\psi_1,\chi^{a_i+u})| \leq \sqrt{p}$ along with the claim (where we are in the second case since $(j,j') \in \mathcal{S}^c$) gives
\begin{equation*}
\sum_{ h \in \Z_N}{g(j+h\textbf1)\overline{g(j'+h\textbf 1)}}  \leq p^{-k}\cdot (p-1)\cdot \sqrt{p}^{2k}\cdot O_k(\sqrt{p}) + p^{-k}\cdot O_k(1) \cdot \sqrt{p}^{2k}\cdot p =O_k(p^{\frac{3}{2}}).
\end{equation*}
Using this in (\ref{eqn.fA}), the fact that $\mathcal{S}^c$ is closed under $(j,j')\mapsto (j+h\textbf 1,j'+h\textbf 1)$ and $H(j,j')$ is invariant under $(j,j')\mapsto (j+h\textbf 1,j'+h\textbf 1)$, we get
\begin{align}
\nonumber \int_0^1|f_A(\theta)|^{2k}\dd\theta
&=
\frac1{N^{2k}}\left(\sum_{(j,j') \in \mathcal{S}}{g(j)\overline{g(j')}H(j,j')}\right.\\ \nonumber & \qquad\qquad \qquad \left. + \sum_{(j,j') \in \mathcal{S}^c}{H(j,j')\frac{1}{N}\sum_{h \in \Z_N}{g(j+h \textbf1)\overline{g(j'+h\textbf 1)}}}\right)\\ \nonumber
&=
\frac1{N^{2k}}\left(\sum_{(j,j') \in \mathcal{S}}{g(j)\overline{g(j')}H(j,j')} + \sum_{j,j' \in \Z_N^k}{\frac{1}{N}O_k(p^{\frac32})|H(j,j')|}\right)\\ & = \frac1{N^{2k}}\sum_{(j,j') \in \mathcal{S}}{g(j)\overline{g(j')}H(j,j')} + O_k(p^{-\frac12}\log^{2k}N).
\end{align}
The passage to the last line uses $N=p(p-1)$ and (\ref{eqn.dirichletsum2}) $2k$ times, once for each of the variables $j_1$, $\dots$, $j_k$, $j_1'$, $\dots$, $j_k'$. (This is similar to the proof of (\ref{Hjj}).) Finally, if $j,j' \in \Z_N^k$ and $\pi \in S_k$ is such that $b=\pi^*(b')$ but $j\neq \pi^*(j')$, then $b=\pi^*(b')$ implies $j_i \equiv j_{\pi(i)}' \bmod p$ for all $i \in [k]$, and hence there is some $i \in [k]$ such that $j_i \neq j_{\pi(i)}'$ and $p \mid j_i - j_{\pi(i)}'$. By (\ref{Hjj2}) (and the fact that $|g(j)|,|g(j')| \leq 1$) we conclude that
\begin{align*}
\left|\sum_{\substack{j,j' \in \Z_N^k\\\exists \pi \in S_k \text{ s.t. }b=\pi^*(b')\\ \text{ and }j \neq \pi^*(j')}}{g(j)\overline{g(j')}H(j,j')}\right| & \leq \sum_{\pi \in S_k}{\sum_{i \in [k]}{\sum_{\substack{j,j' \in \Z_N^k \\ j_i \neq j'_{\pi(i)}\\ p \mid j_i - j'_{\pi(i)}}}{|g(j)||g(j')||H(j,j')|}}}\\
 &= O_k(p^{-1}N^{2k}\log^{2k}N).
\end{align*}
Hence
\begin{equation*}
\int_0^1|f_A(\theta)|^{2k}\dd\theta=\frac{1}{N^{2k}}\sum_{\substack{j,j' \in \Z_N^k\\\exists \pi \in S_k \text{ s.t. }j= \pi^*(j')}}{g(j)\overline{g(j')}H(j,j')}+ O_k(p^{-\frac12}\log^{2k}N).
\end{equation*}
Finally, for $j,j' \in \Z_N^k$ with $j=\pi^*(j')$ we have $g(j)\overline{g(j')}=1$ if $p \nmid j_r$ and $p-1 \nmid j_r$ for all $r \in [k]$, and otherwise $|g(j)\overline{g(j')}| \leq 1$. The result then follows from (\ref{Hjj}) and (\ref{eqn.Fmomentexpansion}).
\end{proof}

\begin{proof}[Proof of Proposition \ref{prop.moments}]\label{proof.moments}
Let $Z_p(\theta):=|f_A(\theta)|^2$ where $A$ is as in (\ref{eqn.ruzsa}), and $Y_p(\omega,\theta):=|F(\theta)|^2$ where $\omega \in \Omega$ and $\Omega$ is a sample space for the random variables $X_1,\dots,X_{N-1}$ where $N=p(p-1)$. We think of $Z_p$ as a random variable on $[0,1]$ and $Y_p$ as a random variable on $\Omega \times [0,1]$. From Proposition \ref{prop.moments2k} we have
\begin{equation}\label{eqn.momentcond}
\E{Z_p^{k}} =\E{Y_p^{k}}+o_{k; p \rightarrow \infty}(1).
\end{equation}
By (\ref{eqn.stein2k}) from Proposition \ref{prop.stein2k} we have $\E{Y_p^k} \leq k!$ and hence by the previous $\E{Z_p^k}\leq k!+o_{k;p \rightarrow \infty}(1)$.

This fact for $k=1$ implies that the laws corresponding to $(Z_p)_p$ and $(Y_p)_p$ are tight (as in the definition \cite[p336]{bil::0}) since $\P(|Z_p|\geq R) \leq R^{-1}\E{Z_p} = O(R^{-1})$ and similarly for the $Y_p$s. Hence for an arbitrary sequence of primes $(p_i)_i$, by Prokhorov's theorem \cite[Theorem 25.10, p.~336]{bil::0}, if $(\mu_i)_i$ (resp.\ $(\nu_i)_i$) has $\mu_i$ (resp.\ $\nu_i$) as the law of $Z_{p_i}$ (resp.\ $Y_{p_i}$), then they both have weakly convergent subsequences $\mu_{i_j} \rightarrow \mu$ and $\nu_{i_j} \rightarrow \nu$. (To ensure it is the same subsequence first pass to a convergent subsequence of the $\mu_i$s, and then pass to a subsequence of that subsequence along which the $\nu_i$s converge.)
 
 By (\ref{eqn.momentcond}) (with $k$ replaced by $2k$) we have that $(Z_{p}^{k})_p$ is uniformly integrable \cite[(25.10), p.~338]{bil::0} since
 \begin{equation*}
 \E{Z_p^k1_{[Z_p^k \geq R]}} \leq R^{-1}((2k)!+o_{k; p \rightarrow \infty}(1)) = O_k(R^{-1}),
 \end{equation*}
 and similarly for $(Y_p^{k})_p$. Since $\mu_{i_j}$ and $\nu_{i_j}$ converge weakly, we then have from \cite[Theorem 25.12]{bil::0} that
 \begin{equation*}
 \int_0^\infty{x^{k}\dd \mu(x)} = \int_0^\infty{x^{k}\dd\nu(x)} \leq k!.
 \end{equation*}
Hence, by Carleman's condition there is a unique solution to the moment problem and $\mu=\nu$. (For a specific reference we could use \cite[Theorem 30.1, p.~388]{bil::0}.) Since $(\sqrt{Z_{p}})_p$ and $(\sqrt{Y_p})_p$ are uniformly integrable (from the uniformly bounded first moments) and $\mu_{i_j}$ and $\nu_{i_j}$ converge weakly we have from \cite[Theorem 25.12]{bil::0} that
 \begin{equation*}
 \lim_{j \rightarrow \infty}{\E{\sqrt{Z_{p_{i_j}}}}-\E{\sqrt{Y_{p_{i_j}}}}}=0.
 \end{equation*}
Since every subsequence has a further subsequence converging to $0$, the sequence itself tends to $0$. This gives the result.
\end{proof}

\subsection{Moments of the random model}\label{sec.stein}

In this section we prove two results:
\begin{proposition*}[Proposition \ref{prop.stein2k}]
For $k\in \N$ we have
\begin{equation}\label{eqn.stein2k}
\int_0^1\E|F(\theta)|^{2k}\dd\theta \leq k!.
\end{equation}
\end{proposition*}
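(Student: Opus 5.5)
The plan is to prove the bound pointwise in $\theta$, namely $\E|F(\theta)|^{2k}\leq k!$ for every $\theta\in\T$, and then integrate over $[0,1]$. Fix $\theta$ and write $c_j:=\frac{1}{N}\wh{1_{[N]}}\left(\theta-\frac{j}{N}\right)$, so that $F(\theta)=\sum_{j=1}^{N-1}c_jX_j$. The argument has two steps: a Khintchine-type moment comparison for Steinhaus sums, followed by an exact computation of $\sum_j|c_j|^2$.

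\emph{Moment comparison.} Expand $|F(\theta)|^{2k}=F(\theta)^k\overline{F(\theta)}^k$ by the multinomial theorem. Since $\E X_j^r\overline{X_j}^s=1_{[r=s]}$ and the $X_j$ are independent, only terms with matching multi-indices survive. This gives
\begin{equation*}
\E|F(\theta)|^{2k}=\sum_{|\alpha|=k}\left(\frac{k!}{\alpha!}\right)^2\prod_j|c_j|^{2\alpha_j},
\end{equation*}
where $\alpha$ ranges over multi-indices on $\{1,\dots,N-1\}$. Now use $\frac{k!}{\alpha!}\leq k!$ on one of the two factors. The multinomial theorem then bounds the sum by
\begin{equation*}
k!\sum_{|\alpha|=k}\frac{k!}{\alpha!}\prod_j|c_j|^{2\alpha_j}=k!\Big(\sum_{j=1}^{N-1}|c_j|^2\Big)^k.
\end{equation*}

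\emph{Computing $\sum_j|c_j|^2$.} It remains to show $\sum_{j=1}^{N-1}|c_j|^2\leq 1$, and this is the only step with any content. We have
\begin{equation*}
\wh{1_{[N]}}\left(\theta-\tfrac{j}{N}\right)=\sum_{x=1}^N e^{-2\pi i x\theta}e^{2\pi i xj/N}.
\end{equation*}
As a function of $j\in\Z_N$, this is the discrete Fourier transform on $\Z_N$ of $x\mapsto e^{-2\pi i x\theta}$, where $x$ runs over $[N]$, a complete residue system. Parseval on $\Z_N$ therefore gives
\begin{equation*}
\sum_{j\in\Z_N}\left|\wh{1_{[N]}}\left(\theta-\tfrac{j}{N}\right)\right|^2=N\sum_{x=1}^N|e^{-2\pi i x\theta}|^2=N^2.
\end{equation*}
Hence $\sum_{j=0}^{N-1}|c_j|^2=1$, and dropping the nonnegative $j=0$ term gives $\sum_{j=1}^{N-1}|c_j|^2\leq1$.

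Combining the two steps, $\E|F(\theta)|^{2k}\leq k!$ for every $\theta$. Integrating over $\theta\in[0,1]$ gives (\ref{eqn.stein2k}). No real obstacle is expected; the only point needing care is recognising the exact Parseval identity on $\Z_N$, which avoids any loss from the cruder bound (\ref{eqn.dirichletsum2}).
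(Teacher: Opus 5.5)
Your proof is correct and matches the paper's. The paper also proves the bound pointwise in $\theta$: it expands $\E|F(\theta)|^{2k}$ by the multinomial theorem, bounds one multinomial coefficient by $k!$, resums to $k!\bigl(\sum_{j=1}^{N-1}|\frac{1}{N}\wh{1_{[N]}}(\theta-\frac{j}{N})|^2\bigr)^k$, and uses that this inner sum is at most $1$. The paper justifies that last bound "by interchanging the order of summation", which is exactly your Parseval identity on $\Z_N$.
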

\begin{proof}
By interchanging the order of summation we have
\begin{equation*}
\sum_{j=1}^{N-1}{\left|\frac{1}{N}\wh{1_{[N]}}\left(\theta - \frac{j}{N}\right)\right|^2} \leq 1.
\end{equation*}
From the definition of $F$ in (\ref{eqn.Fmod}) and $\E{X_j^r\overline{X_j}^s}=1_{[r=s]}$ we have
\begin{equation*}
\E{|F(\theta)|^{2k}}= \sum_{a_1+\cdots +a_{N-1}=k}{\binom{k}{a_1,\dots,a_{N-1}}^2\prod_{j=1}^{N-1}{\left|\frac{1}{N}\wh{1_{[N]}}\left(\theta - \frac{j}{N}\right)\right|^{2a_j}}}.
\end{equation*}
Since the multinomial coefficient is at most $k!$ we then have
\begin{align*}
\E{|F(\theta)|^{2k}}& \leq  k!\sum_{a_1+\cdots +a_{N-1}=k}{\binom{k}{a_1,\dots,a_{N-1}}\prod_{j=1}^{N-1}{\left|\frac{1}{N}\wh{1_{[N]}}\left(\theta - \frac{j}{N}\right)\right|^{2a_j}}} \\ & = k!\left(\sum_{j=1}^{N-1}{\left|\frac{1}{N}\wh{1_{[N]}}\left(\theta - \frac{j}{N}\right)\right|^2}\right)^k \leq k!.
\end{align*}
The proposition is proved.
\end{proof}

\begin{proposition*}[Proposition \ref{prop.stein}]
\begin{equation*}
\int_0^1{\E{|F(\theta)|}\dd \theta} \rightarrow c_\mathcal{R}:=\int_0^1{\E{\left|\sum_{m\in \Z}{Y_m\sinc (\pi(s-m))}\right|}\dd s} \text{ as }N \rightarrow \infty
\end{equation*}
where $(Y_m)_{m \in \Z}$ are independent Steinhaus random variables.
\end{proposition*}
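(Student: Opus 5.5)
We substitute $\theta = s/N$ with $s\in[0,N)$, so that $F(\theta)$ is written in terms of the rescaled Dirichlet kernel. We have
\begin{equation*}
\frac1N\wh{1_{[N]}}\Big(\frac{s-j}{N}\Big)=\frac{1}{N}\sum_{x=1}^N e^{-2\pi i x(s-j)/N} = e^{-\pi i (N+1)(s-j)/N}\,\frac{\sin(\pi(s-j))}{N\sin(\pi(s-j)/N)} .
\end{equation*}
The phase factor $e^{-\pi i(N+1)(s-j)/N}$ is a unimodular constant times $e^{2\pi i j(N+1)/(2N)}$. The $j$-dependent part can be absorbed into $X_j$, since rotating each Steinhaus variable by a deterministic phase preserves the joint law. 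The $j$-independent part does not affect $|F|$. Hence $\E|F(s/N)|$ equals $\E|\sum_{j=1}^{N-1} X_j D_N(s-j)|$, where $D_N(u):=\frac{\sin \pi u}{N\sin(\pi u/N)}$. Since $\int_0^1 \dd\theta = \frac1N\int_0^N \dd s$, and the expression is $1$-periodic in $s$ only up to relabelling the $X_j$, we write $s=r+t$ with $r\in\{0,\dots,N-1\}$ and $t\in[0,1)$. This gives
\begin{equation*}
\int_0^1\E|F|\dd\theta=\int_0^1\frac1N\sum_{r=0}^{N-1}\E\Big|\sum_{m} X_{r-m}D_N(t+m)\Big|\dd t ,
\end{equation*}
where $m=r-j$ ranges over the indices with $r-m\in\{1,\dots,N-1\}$ modulo $N$. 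Because $D_N$ is $N$-(anti)periodic, we take $m$ in a window of length $N$ centred at $0$; this is the sum over $j\not\equiv 0$, i.e.\ all $m\not\equiv r \bmod N$.

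For each fixed $r$ the relabelled variables $Y_m:=X_{r-m}$ are independent Steinhaus variables. So the inner expectation is $\E|\sum_{|m|<N/2,\, m\not\equiv r} Y_m D_N(t+m)|$, compared with the target $\E|\sum_{m\in\Z}Y_m \sinc(\pi(t-m))|$ (up to a reflection $m\mapsto -m$, which is harmless). We bound the difference by $L_2$: since $\big|\E|U|-\E|V|\big|\le (\E|U-V|^2)^{1/2}$, and orthogonality of Steinhaus variables turns $\E|U-V|^2$ into a sum of squared coefficient differences, we need three estimates.
\begin{enumerate}
\item[(i)] $\sum_{|m|<N/2}|D_N(t+m)-\sinc(\pi(t+m))|^2\to0$ uniformly in $t$. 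This uses $|D_N(u)-\sinc(\pi u)|\lesssim |\sin\pi u|\cdot|u|/N^2$ for $|u|\le N/2$, from $\frac1{N\sin(x/N)}-\frac1x=O(x/N^2)$. The sum is then $O(\sum_{|m|<N} m^2/N^4)=O(1/N)$.
\item[(ii)] The tail $\sum_{|m|\ge N/2}\sinc^2(\pi(t+m))=O(1/N)$.
\item[(iii)] The single missing term $m\equiv r$ (the excluded $j=0$) has coefficient $|D_N(t+m)|\le 1$.
\end{enumerate}

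Term (iii) is the main obstacle, because it is not uniformly small: when $r$ is near $0$ its coefficient is of size $\sim 1$. The fix is to average over $r$. The coefficient is $|D_N(t+m_r)|$ with $|m_r|$ comparable to $\min(r,N-r)$, so it is $O(1/(1+\min(r,N-r)))$. Its average over $r$ is therefore $O(\log N/N)\to0$, and for each $r$ it contributes at most that amount to $\big|\E|U|-\E|V|\big|$, by the triangle inequality in $L_1$ of the probability space. Combining (i)–(iii), each $r$-term differs from $\E|\sum_m Y_m\sinc(\pi(t-m))|$ by $O(N^{-1/2})+O(1/(1+\min(r,N-r)))$. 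Averaging over $r$ and integrating over $t$ yields the limit $c_\mathcal{R}$. Finally, the series defining $c_\mathcal{R}$ converges in $L_2$, since $\sum_m\sinc^2(\pi(t-m))=1$, so $c_\mathcal{R}$ is finite and well defined.
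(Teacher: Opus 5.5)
Your argument is correct. It shares the paper's skeleton: rescale $\theta=s/N$, absorb the Dirichlet-kernel phases into the Steinhaus variables, split $s=r+t$, and recentre at $r$. Where it differs is in how you pass to the limit.

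\textbf{The paper's route.} It fixes a truncation parameter $M$ and proceeds in stages:
\begin{enumerate}
\item It discards the edge values $k\le M$ and $k\ge N-M$ (your $r$), which is where the missing $j=0$ term lives, at cost $O(M/N)$.
\item It truncates to $|m|\le M$ at cost $O(M^{-1/2})$, using only the crude decay bound (\ref{eqn.l2dump}).
\item It lets $N\to\infty$ for fixed $M$ by bounded convergence. This needs only the pointwise limit $\sinc(\pi(s-m))/\sinc(\pi(s-m)/N)\to\sinc(\pi(s-m))$.
\item It finally lets $M\to\infty$ via almost sure convergence and uniform integrability of the partial sums $S_M$.
\end{enumerate}

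\textbf{Your route.} You keep the whole period window around each $r$, using the (anti)periodicity of $D_N$. You then compare directly with the full sinc series in $L_2$. This rests on the quantitative estimate $|D_N(u)-\sinc(\pi u)|\lesssim |u|/N^2$ for $|u|\lesssim N/2$, together with the $O(1/N)$ sinc tail. The missing $j\equiv 0$ term you handle by averaging its $O(1/(1+\min(r,N-r)))$ contribution over $r$.

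\textbf{What each buys.} Your approach gives a single limit and an explicit rate $O(N^{-1/2})$. It also avoids the random-series and uniform-integrability machinery entirely: orthogonality of Steinhaus variables does all the work. The paper's approach needs only pointwise convergence and crude decay of the kernel, so it would survive for kernels without a clean Taylor-type comparison. The price is a double limit and no rate.

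\textbf{Two cosmetic points.} First, after recentring, the relabelled variables are $\pm X_{r-m}$ rather than $X_{r-m}$; the signs can be absorbed, so this is harmless. Second, for even $N$ the window $|m|<N/2$ omits one residue class. That term's coefficient is $O(1/N)$, so it is negligible too.
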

\begin{proof}
For $s \in [0,1)$, $m \mapsto \sinc(\pi(s-m))$ is the Fourier transform of $\theta \mapsto \exp(2\pi i s(\theta-\frac12))$ on the interval $[0,1]$ and so by Parseval's theorem we get
\begin{equation}\label{opening}
\sum_{m\in \Z}{\sinc^2 (\pi(s-m))} = 1.
\end{equation}
Write $\Omega$ for the common sample space for the $Y_m$s.  The random variables $Y_m\sinc (\pi(s-m))$ are independent mean zero random variables with summable variance and so
\begin{equation}\label{eqn.asc}
S_M(\omega,s):=\sum_{|m| \leq M}{Y_m\sinc (\pi(s-m))} \rightarrow S(\omega,s):=\sum_{m \in \Z}{Y_m\sinc (\pi(s-m))}
\end{equation}
for every $s \in [0,1)$ and almost every $\omega \in \Omega$.  It follows by Fubini's theorem that $S_M \rightarrow S$ a.e.\ $(\omega,s) \in \Omega \times [0,1)$.

Write $t=N\theta$ to see that
\begin{equation}\label{eqn.firstcalc}
\int_0^1{\E{|F(\theta)|}\dd\theta}= \frac{1}{N}\int_{0}^N{ \E{\left| \sum_{j=1}^{N-1}{X_j\frac{\wh{1_{[N]}}((t-j)/N)}{N}}\right|}\dd t}.
\end{equation}
Now, for any $\theta \in (-1,1)$ we have
\begin{equation}\label{eqn.dk}
\wh{1_{[N]}}(\theta)=\exp(-\pi i (N+1)\theta)\frac{N\sinc (\pi N \theta)}{\sinc (\pi \theta)}.
\end{equation}
Since $X_1$, \dots, $X_{N-1}$ are independent uniform $S^1$-valued random variables, $\omega_1X_1$, \dots, $\omega_{N-1}X_{N-1}$ are independent uniform $S^1$-valued random variables for any $\omega_1,\dots,\omega_{N-1} \in S^1$. Hence from (\ref{eqn.dk}), putting $\omega_j:=\exp(-\pi i \frac{N+1}{N}(t-j))$, we have
\begin{equation*}
 \E{\left| \sum_{j=1}^{N-1}{X_j\frac{\wh{1_{[N]}}((t-j)/N)}{N}}\right|}= \E{\left| \sum_{j=1}^{N-1}{X_j \frac{\sinc(\pi (t-j))}{\sinc(\pi(t-j)/N)}}\right|}.
\end{equation*}
Inserting this in (\ref{eqn.firstcalc}) and writing $t=k+s$ for $s \in [0,1)$ and $k \in \Z$ we get
\begin{align}
\int_0^1{\E{|F(\theta)|}\dd\theta}
&= \frac{1}{N}\sum_{k=0}^{N-1}{\int_0^1{ \E{\left| \sum_{j=1}^{N-1}{X_j \frac{\sinc(\pi(k+s-j))}{\sinc(\pi(k+s-j)/N)}}\right|}\dd s}} \label{eqn.calc}.
\end{align}
To estimate this it is useful to have an auxiliary estimate: for $S \subset [N-1]$ we have
\begin{align}\nonumber 
\left(\E{\left| \sum_{ j \in S}{X_j \frac{\sinc(\pi (k+s-j))}{\sinc(\pi(k+s-j)/N)}}\right|}\right)^2 & \leq \E{\left| \sum_{ j \in S}{X_j \frac{\sinc(\pi (k+s-j))}{\sinc(\pi(k+s-j)/N)}}\right|^2}\\ \nonumber &= \sum_{j \in S}{\frac{\sinc^2( \pi (k+s-j))}{\sinc^2 (\pi (k+s-j)/N)}}\\& \leq \frac{1}{N^2}\sum_{ j \in S}{\frac{1}{\sin^2(\pi (k+s-j)/N)}}\nonumber \\
& =O\left(\sum_{ j \in S}{\frac{1}{1+\min\{|k-j|,(N-|k-j|)\}^2}}\right). \label{eqn.l2dump}
\end{align}
Fix $M$, to be sent to infinity later. We delete those terms on the right in (\ref{eqn.calc}) with $k \leq M$ and $k \geq N-M$. By (\ref{eqn.l2dump}) with $S=[N-1]$ these contribute $O(M/N)$ to the right of (\ref{eqn.calc}). For each $M<k<N-M$ we then use the triangle inequality to delete those $j$s in the inner sum with $|k-j|>M$. By (\ref{eqn.l2dump}) with $S:=\{j \in [N-1]: |k-j|>M\}$ these contribute $O(M^{-1/2})$ to the right of (\ref{eqn.calc}). Combining all this we get
\begin{align*}
\int_0^1{\E{|F(\theta)|}\dd\theta} & = \frac{1}{N}\sum_{M<k<N-M}{\int_0^1{ \E{\left| \sum_{m=-M}^{M}{X_{k+m}\frac{\sinc(\pi(s-m))}{\sinc(\pi(s-m)/N)}}\right|}\dd s}}\\ &\qquad \qquad \qquad \qquad \qquad \qquad \qquad \qquad \qquad + O\left(MN^{-1}+M^{-1/2}\right).
\end{align*}
Now $X_{k-M},\dots,X_{k+M}$ are independent uniform $S^1$-valued random variables. Hence if $(Y_m)_{m \in \Z}$ are independent uniform $S^1$-valued random variables the inner expectation can be written as
\begin{equation*}
 \E{\left| \sum_{m=-M}^{M}{X_{k+m}\frac{\sinc(\pi(s-m))}{\sinc(\pi(s-m)/N)}}\right|}= \E{\left| \sum_{m=-M}^{M}{Y_m\frac{\sinc(\pi (s-m))}{\sinc(\pi(s-m)/N)}}\right|}.
\end{equation*}
In particular it is independent of $k$ and so we have
\begin{align}\label{eqn.mainest}
\int_0^1{\E{|F(\theta)|}\dd\theta} &= \left(1-\frac{2M+1}{N}\right)\int_0^1{ \E{\left| \sum_{m=-M}^{M}{Y_m\frac{\sinc(\pi (s-m))}{\sinc(\pi(s-m)/N)}}\right|}\dd s} \\ &\qquad \qquad \qquad \qquad \qquad \qquad \qquad \qquad \qquad + O(MN^{-1}+M^{-1/2}). \nonumber
\end{align}
For fixed $M$, $-M \leq m \leq M$, and $s \in [0,1)$ we have $\frac{\sinc(\pi (s-m))}{\sinc (\pi (s-m)/N)} \rightarrow \sinc(\pi (s-m))$ as $N \rightarrow \infty$. Hence
\begin{equation*}
\sum_{m=-M}^{M}{Y_m(\omega)\frac{\sinc(\pi (s-m))}{\sinc(\pi(s-m)/N)}} \rightarrow S_M(\omega,s)
\end{equation*}
for almost every $\omega \in \Omega$. As before that means by Fubini's theorem that the left hand side converges to $S_M(\omega,s)$ for a.e.\ $(\omega,s) \in \Omega \times [0,1)$. Once $N>2M$ the left hand side is also bounded independently of $N$, $\omega$, and $s$, and so by bounded convergence we have
\begin{equation*}
\int_0^1{ \E{\left| \sum_{m=-M}^{M}{Y_m\frac{\sinc(\pi (s-m))}{\sinc(\pi(s-m)/N)}}\right|}\dd s} \rightarrow \int_0^1{ \E{\left| S_M(\omega,s)\right|}\dd s}
\end{equation*}
as $N \rightarrow \infty$. Hence
\begin{equation*}
\limsup_{N \rightarrow \infty}{\left| \int_0^1{\E{|F(\theta)|}\dd\theta} -\int_0^1{ \E{\left| S_M(\omega,s)\right|}\dd s}\right|} =O(M^{-1/2}).
\end{equation*}
Finally, from (\ref{opening}),
\begin{equation*}
 \E{\left| S_M(\omega,s)\right|^2} =\sum_{|m| \leq M}{\sinc^2(\pi (s-m))} \leq \sum_{m \in \Z}{\sinc^2(\pi (s-m))}=1.
\end{equation*}
It follows that $\|S_M\|_{L_2(\Omega\times [0,1))} \leq 1$ and hence the random variables $S_M$ are uniformly integrable. In view of the almost sure convergence in (\ref{eqn.asc}), the random variables $S_M$ converge in $L_1(\Omega\times [0,1))$ as required. The result is proved.
\end{proof}

\section{Limitations of the approach}\label{sec.limitations}

The approach of this paper to finding sets of integers $A$ such that $\|\wh{1_{A}}\|_1$ is large has been to first find a subset $\mathcal{A}$ of a cyclic group $\Z_N$ for which $\|\wh{1_{\mathcal{A}}}\|_1$ is large and then lift it to the integers $[N]$.

The only examples we know of subsets of cyclic groups for which $\|\wh{1_\mathcal{A}}\|_1$ is close to the maximum described in Proposition \ref{prop.upgen}, also satisfy
\begin{equation}\label{eqn.robperf}
\|1_\mathcal{A} \ast 1_{-\mathcal{A}} - 1_{\Z_N}\|_{\ell_1(\Z_N)} \leq Kn \text{ where }n=|\mathcal{A}|
\end{equation}
for some small $K$. Since $1_\mathcal{A} \ast 1_{-\mathcal{A}}(0)=n$ we always have $K \geq 1-\frac{1}{n}$, and if $\mathcal{A}$ is a perfect difference set we have equality.

In general we think of $K=o_{n \rightarrow \infty}(n)$, so that $N\sim n^2$. This is similar to the condition of being a dense Sidon set in the sense of \cite{Eberhard:2023aa}, and we refer the reader there for numerous examples.

It turns out that lifts of such sets cannot show that $c_\textsc{b}=0$ as we shall see in Proposition \ref{prop.limit} below. This also explains the role of (\ref{eqn.robperf}): it both captures all the examples we know, and is enough to ensure that $\wh{1_\mathcal{A}}$ is small away from the identity which is what makes the proof below work. It is not, as far as we know, natural in any more significant way.
\begin{proposition}\label{prop.limit}
Suppose that $\mathcal{A} \subset \Z_N$ has size $n>1$ and satisfies (\ref{eqn.robperf}) for some $K$. Then writing $A$ for the lift of $\mathcal{A}$ in $[N]$, we have
\begin{equation*}
\|\wh{1_A}\|_1 \leq \left(1-\frac{\newconstlittle{mz}}{K^2}\right)\sqrt{n}.
\end{equation*}
\end{proposition}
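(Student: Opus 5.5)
The strategy is to show that, after sampling $\wh{1_A}$ on the shifted grids $\{(j+s)/N\}_{j\in\Z_N}$, the discrete fourth moment is bounded away from the flat value $n^2$. A quartic inequality of the kind in Proposition \ref{prop.upgen}, with the $L_2$ mass pinned by Parseval, then forces an $L_1$ deficit. Write $P:=\wh{1_A}$, a trigonometric polynomial with frequencies in $[N]$, and $c_j:=\wh{1_{\mathcal A}}(j/N)$. By (\ref{eqn.flarge}) and (\ref{eqn.dk}),
\begin{equation*}
P\left(\frac{j+s}{N}\right)=\sum_{m\in\Z_N}c_{j-m}w_m(s),\qquad w_m(s)=\frac{\exp(\ast)\sinc(\pi(s-m))}{N\sinc(\pi(s-m)/N)},
\end{equation*}
where $\exp(\ast)$ denotes a unimodular factor. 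The first step is the identity
\begin{equation*}
\|P\|_1=\int_0^1\frac1N\sum_{j\in\Z_N}\left|P\left(\frac{j+s}{N}\right)\right|\dd s.
\end{equation*}
For each fixed $s$, the values $(P((j+s)/N))_j$ are the discrete Fourier transform of the modulated function $x\mapsto 1_A(x)e(-xs/N)$ on $\Z_N$. By Parseval on $\Z_N$, $\frac1N\sum_j|P((j+s)/N)|^2=n$ exactly. This is the Marcinkiewicz--Zygmund-type reduction: it converts the problem to a family of discrete problems indexed by $s$.

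The second step is to compute the discrete fourth moment $M_4(s):=\frac1N\sum_j|P((j+s)/N)|^4$. Expanding in the $c$'s gives sums $\sum_j c_{j-a}c_{j-b}\overline{c_{j-a'}c_{j-b'}}$ weighted by $w_aw_b\overline{w_{a'}w_{b'}}$. These correlation sums are governed by the additive structure of $\mathcal A$, namely the function $1_{\mathcal A}\ast1_{-\mathcal A}$. Hypothesis (\ref{robperf}) will say this is $1_{\Z_N}+(n-1)1_{\{0\}}$ up to an $\ell_1$ error $Kn$.
\begin{itemize}
\item The ``Gaussian'' pairings $\{a,b\}=\{a',b'\}$ contribute $2n^2(\sum_m|w_m|^2)^2$.
\item Subtracting the overcounted terms with $a=b=a'=b'$ leaves a main term of $2n^2-n^2\sum_m|w_m(s)|^4$, using $\sum_m|w_m|^2=1$.
\item Everything else is controlled by the $\ell_1$ error, giving $O(Kn\cdot\text{weights})$.
\end{itemize}
Since the $w_m(s)$ approximate $\sinc(\pi(s-m))$, we have $\delta(s):=1-\sum_m|w_m(s)|^4\gg \min(s,1-s)^2$. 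Hence $M_4(s)\ge n^2(1+\delta(s))-O(K\,\cdot)$ on a set of $s$ of positive measure. One also has to handle the single spike coming from the $j=0$ term ($c_0=n$), which spreads over $O(1)$ neighbouring $j$. I expect this to only help, since it increases the fourth moment.

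The third step converts the fourth-moment excess into an $L_1$ deficit. For a sequence $t_j\in[0,T]$ with mean square $n$ and mean fourth power $\ge(1+\delta)n^2$, the inequality $t\le\alpha_0+\alpha_1t^2-\alpha_2t^4$ does not go the right way for an upper bound. Instead I would use a Paley--Zygmund/Hölder variant: a large fourth moment with fixed second moment forces mass on small values of $|P|$. Concretely, I would use $\mathbb E|X|\le\sqrt{\mathbb E|X|^2}\,(1-c\,\mathrm{Var}(|X|^2)/(\dots))$. The needed upper bound on higher moments, for example the sixth, I would get from the same expansion with (\ref{robperf}), giving $O(n^3)$ for typical $s$. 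Integrating over $s$ yields $\|P\|_1\le(1-c/K^2)\sqrt n$.

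The main obstacle is the second step: bounding the off-diagonal correlation terms using only the $\ell_1$ control (\ref{robperf}). The weights $w_m$ decay only like $1/|m|$, so one cannot simply take absolute values and sum. I would first try truncating to $|m|\le M$ as in the proof of Proposition \ref{prop.stein}, with $M$ a suitable power of $K$, and treat the tail using the $L_2$ bound (\ref{eqn.l2dump}). The $1/K^2$ then appears from balancing the truncation error against the $\ell_1$ error term. I am least sure that this balance lands exactly at $K^{-2}$ rather than at some other power of $K$.
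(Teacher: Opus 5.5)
There is a genuine gap. Your outline has the right shape: a fourth-moment excess over the flat value, a sixth-moment upper bound, and the H\"older step $\int(|f|^2-1)^2\le(\int(|f|-1)^2)^{1/2}(8\int(|f|^6+1))^{1/2}$ to turn these into an $L_1$ deficit. That is how the paper finishes. But neither moment input is established, and the sixth-moment bound is false as you state it, because of the spike at $\theta=0$.

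You expect the spike to ``only help''. It helps the fourth moment but ruins the sixth. For any $n$ integers in $[N]$, Cauchy--Schwarz gives $\|\wh{1_A}\|_6^6=\sum_x\#\{(a_1,a_2,a_3)\in A^3:a_1+a_2+a_3=x\}^2\ge n^6/(3N)\asymp n^4$, and this quantity is $\int_0^1M_6(s)\,ds$. So the grid sixth moments are not $O(n^3)$, the normalised sixth moment is of order $n$, and the H\"older step returns a deficit of order only $1/n$, no better than Proposition \ref{prop.upgen}. (The quartic majorant there does go the right way; it is weak for the same reason, since it must hold up to $\|\wh{1_A}\|_\infty=n$.)

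The missing idea is to remove the spike before taking moments. Put $f:=\frac1{\sqrt n}\wh{1_A}-\frac{\sqrt n}{N}\wh{1_{[N]}}$; this changes $\|\wh{1_A}\|_1$ by only $\frac nN\|\wh{1_{[N]}}\|_1=O(n^{-1}\log n)$. Then $f(r/N)=0$ when $N\mid r$. Otherwise $|f(r/N)|=|\wh{1_{\mathcal A}}(r)|/\sqrt n\le\sqrt K$, because $|\wh{1_{\mathcal A}}(r)|^2$ is the Fourier coefficient at $r$ of $1_{\mathcal A}\ast1_{-\mathcal A}-1_{\Z_N}$, whose $\ell_1$-norm is at most $Kn$ by (\ref{eqn.robperf}). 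The Marcinkiewicz--Zygmund inequality (\ref{thm.pt2}) then gives $\|f\|_6^6\le C_6\max_r|f(r/N)|^4\cdot\frac1N\sum_r|f(r/N)|^2\le C_6K^2$, with no sixth-order correlation expansion. (Your Step 1 is only a change of variables; your argument never actually uses this inequality.) This is exactly where $K^{-2}$ comes from, via $1-\|f\|_1\ge(\|f\|_4^4-1)^2/(16\int(|f|^6+1))$. It is not the outcome of balancing a truncation.

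Your self-identified main obstacle, the fourth moment, comes from working fibre by fibre in $s$ through correlations of the $c_j$, and that route does not close as described. Already the ``Gaussian'' pairings involve $\frac1N\sum_j|c_j|^2|c_{j+h}|^2=\sum_d r(d)^2\exp(2\pi i hd/N)$ with $r=1_{\mathcal A}\ast1_{-\mathcal A}$. Here $\ell_1$ control of $r-1_{\Z_N}$ bounds $\sum_{d\neq0}(r(d)-1)^2$ only by $Kn^2$, which is not small compared with the main term $n^2$, so the error is not $O(Kn)$.

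No fibrewise information is needed: after Cauchy--Schwarz in $s$, your fibrewise H\"older step is the global one. Globally the fourth moment comes almost for free:
\begin{itemize}
\item $\|\wh{1_A}\|_4^4$ is the additive energy of $A$, which is at least $2n^2-n$ for any $n$ integers;
\item $\|\wh{1_{[N]}}\|_4^4=(2N^3+N)/3$ exactly;
\item the cross terms between $\frac1{\sqrt n}\wh{1_A}$ and the spike are bounded by H\"older, using $\|f\|_6^6\le C_6K^2$ and $\|\wh{1_{[N]}}\|_p\le N^{1-1/p}$.
\end{itemize}
Summing the function in (\ref{eqn.robperf}) over $\Z_N$ gives $|N-n^2|\le Kn$. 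This yields $\|f\|_4^4\ge\frac43-O(K^{1/3}n^{-1/6})$, matching your model value $1+\int_0^1\delta(s)\,ds=\frac43$. So (\ref{eqn.robperf}) enters only through the grid sup bound and $N\approx n^2$. The range $K\gtrsim\sqrt n$ (or $n$ bounded), where this error term is not small, is handled directly by Proposition \ref{prop.upgen}.
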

\begin{proof}
Since $n>1$, we have $K \geq 1-\frac{1}{n}\geq \frac{1}{2}$ and also (since $n>1$) Proposition \ref{prop.upgen} tells us that
\begin{equation*}
\|\wh{1_A}\|_1 \leq \sqrt{n}\left(1-\frac{\newconstlittle{up}}{n}\right).
\end{equation*}
This means we can make some asymptotic assumptions about our parameters at the cost of reducing $\refconstlittle{mz}$. Specifically we may assume that
\begin{equation}\label{eqn.nlargee}
K \leq \frac{1}{216\refconstbig{claim}^3}\sqrt{n} \text{ and } \frac{\log n}{\sqrt{n}} \leq \frac{1}{1152(C_6+4)\refconstbig{k}}
\end{equation}
by taking (since $\log n\leq 4n^{\frac{1}{4}}$)
\begin{equation*}
\refconstlittle{mz}=\min{\left\{\frac{\refconstlittle{up}}{216^2\refconstbig{claim}^6},
\frac{\refconstlittle{up}}{1024(1152(C_6+4)\refconstbig{k})^4}\right\}}.
\end{equation*}
We pair $\Z_N$ with itself via the map $(r,x) \mapsto \exp(2\pi i rx/N)$. Then
\begin{equation}\label{eqn.sub}
\wh{1_{\Z_N}}(r)=\begin{cases} N &\text{ if }N \mid r\\ 0 & \text{ otherwise.}\end{cases}
\end{equation}
Hence by (\ref{eqn.robperf}) and the Hausdorff-Young inequality
\begin{equation*}
|\wh{1_\mathcal{A}}(r)|^2 = |(1_{\mathcal{A}} \ast 1_{-\mathcal{A}} -1_{\Z_N})^\wedge(r)| \leq Kn \text{ whenever }N \nmid r .
\end{equation*}
This rearranges to
\begin{equation}
\label{eqn.upt}
|\wh{1_\mathcal{A}}(r)| \leq \sqrt{Kn}\text{ whenever }N \nmid r.
\end{equation}
We also have
\begin{equation}
\label{eqn.other}
\frac{1}{\sqrt{n}}\wh{1_\mathcal{A}}(0) - \frac{\sqrt{n}}{N}\wh{1_{\Z_N}}(0)=0.
\end{equation}
Since $A$ is the lift of $\mathcal{A}$ to $[N]$, and $[N]$ is the lift of $\Z_N$ to $[N]$, we have
\begin{equation*}
\wh{1_A}(r/N) = \wh{1_{\mathcal{A}}}(r) \text{ and } \wh{1_{[N]}}(r/N)=\wh{1_{\Z_N}}(r) \text{ for all }r.
\end{equation*}
Writing
\begin{equation*}
f(\theta):=\frac{1}{\sqrt{n}}\wh{1_A}(\theta) - \frac{\sqrt{n}}{N}\wh{1_{[N]}}(\theta) \text{ for all }\theta \in \T,
\end{equation*}
we have from (\ref{eqn.upt}) and (\ref{eqn.other}) that
\begin{align}
 |f(r/N)|&=\left|\frac{1}{\sqrt{n}}\wh{1_\mathcal{A}}(r) - \frac{\sqrt{n}}{N}\wh{1_{\Z_N}}(r)\right| \leq \sqrt{K} \text{ for all }r;\label{eqn.inf}
\end{align}
and by Parseval's theorem for $\Z_N$ we have that
\begin{equation}\label{eqn.paro}
\frac{1}{N}\sum_{r=1}^N{|f(r/N)|^2} = \left\|\frac{1}{\sqrt{n}}1_\mathcal{A} - \frac{\sqrt{n}}{N}1_{\Z_N}\right\|_{\ell_2(\Z_N)}^2 = 1-\frac{n}{N} \leq 1.
\end{equation}
Note that Parseval's theorem for $\Z$ also gives
\begin{equation}\label{eqn.paro2}
\int_0^1{|f(\theta)|^2\dd\theta}=\left\|\frac{1}{\sqrt{n}}1_A - \frac{\sqrt{n}}{N}1_{[N]}\right\|_{\ell_2(\Z)}^2=1-\frac{n}{N}
\end{equation}
since $A \subset [N]$.

Define the polynomial
\begin{equation*}
P(z)=\frac{1}{\sqrt{n}}\sum_{a \in A}{z^{a-1}} - \frac{\sqrt{n}}{N}\sum_{j=1}^N{z^{j-1}} 
\end{equation*}
which is a polynomial (since $a \geq 1$ for all $a \in A$) of degree at most $N-1$. Moreover,
\begin{equation*}
f(\theta)=\exp(2\pi i \theta)P(\exp(2\pi i \theta))
\end{equation*}
and so
\begin{equation*}
|f(\theta)|=|P(\exp(2\pi i \theta))|.
\end{equation*}
Now apply the Marcinkiewicz–Zygmund inequality for interpolating polynomials (not for sums of random variables). Specifically, apply Theorem \ref{thm.zyg}, case (\ref{thm.pt2}), to the polynomial $P(z)$ with $p=6$ to get
\begin{equation*}
\int_0^1{|f(\theta)|^6\dd\theta}=\int_0^1{|P(\exp(2\pi i \theta))|^6\dd\theta} \leq C_6\frac{1}{N}\sum_{r=1}^N{|f(r/N)|^6}\leq C_6K^2.
\end{equation*}
The last inequality here is by (\ref{eqn.inf}) and (\ref{eqn.paro}).
\begin{claim} 
\begin{equation*}
\int_0^1{|f(\theta)|^4\dd\theta} \geq \frac{4}{3}-\newconstbig{claim}K^{\frac{1}{3}}n^{-\frac{1}{6}}.
\end{equation*}
\end{claim}
\begin{proof}
We use the inequality
\begin{equation*}
||a-b|^4-|a|^4+|b|^4|\leq 4|a-b|^3|b|+6|a-b|^2|b|^2+4|a-b||b|^3 \text{ for all }a,b \in \C.
\end{equation*}
Put $a=\frac{1}{\sqrt{n}}\wh{1_A}(\theta)$ and $b=\frac{\sqrt{n}}{N}\wh{1_{[N]}}(\theta)$ and integrate over $\theta$. Then
\begin{align}
\nonumber&\left|\int_0^1{|f(\theta)|^4\dd\theta}- \frac{1}{n^2}\int_0^1{|\wh{1_A}(\theta)|^4\dd \theta} + \frac{n^2}{N^4}\int_0^1{|\wh{1_{[N]}}(\theta)|^4\dd \theta}\right| \\ 
\nonumber&\qquad \qquad \qquad \qquad \leq \int_0^1{\left||f(\theta)|^4- \frac{1}{n^2}|\wh{1_A}(\theta)|^4 + \frac{n^2}{N^4}|\wh{1_{[N]}}(\theta)|^4\right|\dd\theta} \\ \nonumber &\qquad \qquad  \qquad \qquad\leq \int_0^1{\frac{4n^{1/2}}{N}|f(\theta)|^3|\wh{1_{[N]}}(\theta)| + 6  \frac{n}{N^2}|f(\theta)|^2|\wh{1_{[N]}}(\theta)|^2}\\ &\label{eqn.u5y} \qquad \qquad\ \qquad \qquad\qquad \qquad\qquad \qquad\qquad \qquad + 4\frac{n^{3/2}}{N^3}|f(\theta)||\wh{1_{[N]}}(\theta)|^3\dd\theta.
\end{align}
Now
\begin{equation*}
\|\wh{1_{[N]}}\|_p^p =\int_0^1{|\wh{1_{[N]}}(\theta)|^p\dd\theta}\leq \|\wh{1_{[N]}}\|_\infty^{p-2}\|\wh{1_{[N]}}\|_2^2 \leq N^{p-1} \text{ whenever }p\geq 2.
\end{equation*}
Hence $\|\wh{1_{[N]}}\|_p \leq N^{1-\frac{1}{p}}$ for $p\geq 2$. H{\"o}lder's inequality then gives
\begin{equation*}
\int_0^1{|f(\theta)|^3|\wh{1_{[N]}}(\theta)|\dd\theta} \leq \|f\|_6^3\|\wh{1_{[N]}}\|_2=O(KN^{\frac{1}{2}});
\end{equation*}
\begin{equation*}
 \int_0^1{|f(\theta)|^2|\wh{1_{[N]}}(\theta)|^2\dd\theta} \leq \|f\|_6^2\|\wh{1_{[N]}}\|_3^{2}=O(K^{\frac{2}{3}}N^{\frac{4}{3}});
\end{equation*}
and also
\begin{equation*}
\int_0^1{|f(\theta)||\wh{1_{[N]}}(\theta)|^3\dd\theta} \leq \|f\|_6 \|\wh{1_{[N]}}\|_{\frac{18}{5}}^3  = O(K^{\frac{1}{3}}N^{\frac{13}{6}}).
\end{equation*}
The three previous estimates in (\ref{eqn.u5y}) along with $N^{-1}=O(n^{-2})$ and $K=O(\sqrt{n})$ give
\begin{equation*}
\left|\int_0^1{|f(\theta)|^4\dd\theta}-\frac{1}{n^2}\int_0^1{|\wh{1_A}(\theta)|^4\dd\theta} + \frac{n^2}{N^4}\int_0^1{|\wh{1_{[N]}}(\theta)|^4\dd\theta}\right| = O(K^{\frac{1}{3}}n^{-\frac{1}{6}}).
\end{equation*}
Finally, $\int_0^1{|\wh{1_A}(\theta)|^4\dd\theta}$ is the additive energy of $A$ which is at least $2n^2-n$. Then following the calculation at the top of \cite[p.~945]{Shao:2026aa} for the additive energy of an interval gives:
\begin{align}\nonumber
\int_0^1{|\wh{1_{[N]}}(\theta)|^4\dd\theta}& = \int_0^1{\wh{1_{[N]}}(\theta)^2\overline{\wh{1_{[N]}}(\theta)^2}\dd\theta}\\ \nonumber & =\sum_{z \in \Z}{1_{[N]} \ast 1_{[N]}(z)^2}\\ \nonumber & =\sum_{z\in \Z}{|\{(s,t): s+t=z, 1 \leq s,t \leq N\}|^2}\\ \label{eqn.quadcount} & = 1^2+\dots + N^2+(N-1)^2+\dots+1^2 = \frac{2N^3+N}{3}.
\end{align}
Inserting these gives the claimed result, using again that $N^{-1}=n^{-2}+O(Kn^{-3})$.
\end{proof}
Now we follow the derivation in \cite[(3.3), p.~61]{bou::7}. First,
\begin{align*}
\int_0^1{(|f(\theta)|^2-1)^2\dd\theta} & = \int_0^1{(|f(\theta)|-1)\cdot ((|f(\theta)|^2-1)(|f(\theta)|+1))\dd\theta}\\ & \leq \left(\int_0^1{(|f(\theta)|-1)^2\dd\theta}\right)^{\frac{1}{2}} \left(\int_0^1{((|f(\theta)|^2-1)(|f(\theta)|+1))^2\dd\theta}\right)^{\frac{1}{2}}\\ & \leq  \left(\int_0^1{(|f(\theta)|-1)^2\dd\theta}\right)^{\frac{1}{2}} \left(8\int_0^1{(|f(\theta)|^6+1)\dd\theta}\right)^{\frac{1}{2}}.
\end{align*}
From the claim and the fact that $K\leq \frac{1}{216 \refconstbig{claim}^3}\sqrt{n}$ and (\ref{eqn.paro2}) we have that
\begin{equation*}
\frac{1}{6} \leq \|f\|_4^4-1\leq \|f\|_4^4+1-2(1-\frac{n}{N}) = \int_0^1{(|f(\theta)|^2-1)^2\dd\theta}.
\end{equation*}
From (\ref{eqn.paro2}) again we also have
\begin{equation*}
\int_0^1{(|f(\theta)|-1)^2\dd\theta} = 1+\int_0^1{|f(\theta)|^2\dd\theta}-2\|f\|_1 \leq 2(1-\|f\|_1)).
\end{equation*}
Finally since $K \geq \frac{1}{2}$ we have
\begin{equation*}
\int_0^1{(|f(\theta)|^6+1)\dd\theta} \leq (C_6+4)K^2.
\end{equation*}
Combining all these gives
\begin{equation*}
 \|f\|_1 \leq 1-\frac{1}{576(C_6+4)K^2}.
\end{equation*}
By \cite[(12.1), Chapter II, Vol.~1, p.~67]{zyg::} and the fact $N^{-1}=O(n^{-2})$ and $n^{-1}=O(K^{-2})$ we have
\begin{equation*}
\frac{n}{N}\|\wh{1_{[N]}}\|_1 \leq \sqrt{n}\cdot K^{-2}\cdot \newconstbig{k}\frac{\log n}{\sqrt{n}}.
\end{equation*}
Finally,
\begin{align*}
\|\wh{1_A}\|_1& \leq \sqrt{n}\|f\|_1 + \frac{n}{N}\|\wh{1_{[N]}}\|_1\\ & \leq \sqrt{n}\left(1-\frac{1}{K^2}\left(\frac{1}{576(C_6+4)} -\refconstbig{k}\frac{\log n}{\sqrt{n}}\right)\right).
\end{align*}
The result is proved given the assumed size of $n$ in (\ref{eqn.nlargee}).
\end{proof}

For the record \cite[Theorem 7.10, Chapter XI, Vol 2, p.~30]{zyg::} (see \cite[(1.18), Chapter X, Vol.~2, p.~4]{zyg::} for the definition of $\omega_{n+1}$ there) states the following:
\begin{theorem}\label{thm.zyg}
There is $C>1$ and for each $p \in (0,\infty)$ there is $C_p>1$ such that if $N \in \N^*$, and $P$ is a degree $N-1$ polynomial then
\begin{equation}\label{thm.pt1}
\left(\frac{1}{N}\sum_{r=0}^{N-1}{|P(\exp(2\pi i r/N))|^p}\right)^{1/p} \leq C\left(\int_0^1{|P(\exp(2\pi i \theta))|^p\dd\theta}\right)^{1/p} \text{ for all }1 \leq p \leq \infty;
\end{equation}
and
\begin{equation}\label{thm.pt2}
\left(\int_0^1{|P(\exp(2\pi i \theta))|^p\dd\theta}\right)^{1/p} \leq C_p^{1/p}\left(\frac{1}{N}\sum_{r=0}^{N-1}{|P(\exp(2\pi i r/N))|^p}\right)^{1/p}  \text{ for all }1 < p < \infty.
\end{equation}
\end{theorem}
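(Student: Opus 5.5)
This is the classical Marcinkiewicz--Zygmund interpolation theorem for trigonometric polynomials. The plan is to prove the two halves separately, by reduction to standard tools applied to $T(\theta):=P(\exp(2\pi i\theta))$, a trigonometric polynomial with frequencies in $\{0,\dots,N-1\}$.

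For (\ref{thm.pt1}) I would use a reproducing kernel. Write $T=T\ast V$, where $V$ is a de la Vall\'ee Poussin type kernel. Since the spectrum of $T$ has length $N$, $V$ can be taken to have $\wh{V}=1$ on the frequencies of $T$, to have spectrum in an interval of length about $2N$, and to satisfy $\|V\|_1=O(1)$.

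1. Write $T(r/N)=\int T(\theta)V(r/N-\theta)\dd\theta$.
2. By H\"older with weights $|V|$,
\begin{equation*}
|T(r/N)|^p\le \|V\|_1^{p-1}\int|T(\theta)|^p|V(r/N-\theta)|\dd\theta .
\end{equation*}
3. Sum over $r$ and divide by $N$. This requires the discrete bound $\frac1N\sum_r|V(r/N-\theta)|=O(1)$ uniformly in $\theta$.

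That discrete bound follows from $|V(\theta)|=O(\min(N,N^{-1}\|\theta\|_\T^{-2}))$, the Fej\'er-type decay. The case $p=\infty$ is trivial.

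For (\ref{thm.pt2}) with $1<p<\infty$ I would use Lagrange interpolation at the $N$th roots of unity. Since $\deg P\le N-1$,
\begin{equation*}
T(\theta)=\frac1N\sum_{r}T(r/N)D(\theta-r/N),
\end{equation*}
where $D(\theta)=\sum_{m=0}^{N-1}e(m\theta)$ is, up to a phase, the Dirichlet kernel $\wh{1_{[N]}}$. The goal is that the operator $c\mapsto \frac1N\sum_r c_rD(\cdot-r/N)$ is bounded from $\ell_p(\Z_N,\text{normalised})$ to $L_p(\T)$. This cannot be done by a crude kernel bound, since $\|D\|_1\sim\log N$. Instead I would use the boundedness of the Riesz projection (conjugate function) on $L_p$ for $1<p<\infty$, in four steps.

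1. Write $D(\theta-r/N)=e((N-1)(\theta-r/N)/2)\cdot\frac{\sin\pi N(\theta-r/N)}{\sin\pi(\theta-r/N)}$.
2. Note that $\sin\pi N(\theta-r/N)=\pm\sin\pi N\theta$, so the factor $\sin(\pi N\theta)$ comes out of the sum.
3. This reduces the sum to a discrete Hilbert-transform-type sum $\sum_r c_r'/\sin\pi(\theta-r/N)$, with modified coefficients $c_r'$ of the same moduli.
4. Bound this sum in $L_p$ by comparing it with the conjugate function of a step function $\sum_r c_r'1_{I_r}$, where the $I_r$ are intervals of length $1/N$ centred at $r/N$. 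The error kernel is controlled by a maximal-function (Fej\'er-type) estimate.

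An alternative route to the same end is duality. Pair $T$ with $g\in L_{p'}$ and rewrite $\int T\bar g$ as $\frac1N\sum_r T(r/N)\overline{(S_Ng)(r/N)}$, where $S_N$ is the Fourier projection onto frequencies $[0,N-1]$; this identity holds because $T$ has degree less than $N$. Then apply (\ref{thm.pt1}) with exponent $p'$ to the polynomial $S_Ng$ (a degree-$(N-1)$ polynomial, so (\ref{thm.pt1}) applies) to get $\frac1N\sum_r |S_Ng(r/N)|^{p'}\le C^{p'}\|S_Ng\|_{p'}^{p'}$. Finally bound $\|S_Ng\|_{p'}\le C_{p'}'\|g\|_{p'}$ by M.~Riesz's theorem, since $S_N$ is a modulated difference of Riesz projections. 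This is cleaner, and I would take it as the main line.

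The main obstacle is precisely the need for M.~Riesz's theorem. The constant in (\ref{thm.pt2}) must blow up as $p\to1$ or $p\to\infty$, which is why the endpoint cases are excluded, and any correct argument has to route through the $L_p$-boundedness of partial Fourier sums. Everything else reduces to routine kernel estimates of the kind in (\ref{eqn.dirichletbound2}).
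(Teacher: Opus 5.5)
Your argument is correct. The paper gives no proof of its own and only cites Zygmund, and your route is the standard classical proof of that cited result: averaging against a modulated de la Vall\'ee Poussin kernel for (\ref{thm.pt1}), which yields the constant $\max(\|V\|_1,\sup_\theta \frac1N\sum_r|V(r/N-\theta)|)$ independent of $p$ as the statement requires; and, for (\ref{thm.pt2}), duality, exactness of the $N$-point average on trigonometric polynomials with frequencies in $(-N,N)$, (\ref{thm.pt1}) at exponent $p'$ applied to $S_Ng$, and M.~Riesz's theorem.
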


\bibliographystyle{halpha}

\bibliography{references}

\end{document}